\DeclareMathOperator{\prox}{prox}
\newtheorem{theorem}{Theorem}[section]
\newtheorem{definition}{Definition}[section]
\newtheorem{lemma}{Lemma}[section]
\newtheorem{corollary}{Corollary}[section]
\newtheorem{example}{Example}[section]
\newtheorem{remark}{Remark}[section]
\newcommand\authors{\small\bfseries Golden ratio algorithms for solving equilibrium problems in Hilbert spaces\hfill}
\newcommand\shorttitle{\small\bfseries\hfill N.T. Vinh}
	\ifodd\value{page}
\authors	\else	\small\scshape\shorttitle
\renewcommand{\@seccntformat}[1]{\csname the#1\endcsname.\ }
\begin{document}

\title{Golden ratio algorithms for solving equilibrium problems in Hilbert spaces}
\author{Nguyen The Vinh$^1$}
\date{\vspace*{-4em}}
\renewcommand{\thefootnote}{}
\renewcommand{\thefootnote}{}
\footnotetext{\hskip -0.61cm This research is funded by Vietnam National Foundation for Science and Technology Development (NAFOSTED) under Grant No.101.01-2017.08.\vspace*{-0.65em}\\ \noindent
\rule{0.5\textwidth}{0.1pt}
	\begin{enumerate}[label=(\alph*),leftmargin=1.65\parindent]
		\item[\Letter] Nguyen The Vinh\\
		thevinhbn@utc.edu.vn
		\item[$^1$\ ] Department of Mathematics, University of Transport and Communications, Hanoi, Vietnam
	\end{enumerate}
}

\maketitle
\thispagestyle{empty}

\begin{abstract}
\noindent {\bf Abstract.} 
In this paper, we design a new iterative algorithm for solving pseudomonotone equilibrium problems in real Hilbert spaces. The advantage of our algorithm is that it requires only one strongly convex programming problem at each iteration.  Under suitable conditions we establish the strong and weak convergence of the proposed algorithm. The results presented in the paper extend and improve some recent results in the literature. The performances and comparisons with some existing methods are presented through numerical examples.
\vskip 0.15cm\noindent
{\bf Keywords.} Equilibrium problem, Extragradient algorithm, Golden ratio algorithm, Weak convergence, Strong convergence, Variational inequality.
\vskip 0.15cm\noindent
{\bf AMS Subject Classification.} 65K10, 65K15, 90C33
\end{abstract}

\section{Introduction}
\label{introduction}
Equilibrium problems unify many important problems, such as optimization problems, variational inequality problems and fixed point
problems, saddle point (minimax) problems, Nash equilibria
problems and complementarity problems. As far as we know, the term "equilibrium problem" was
coined in 1992 by Muu and Oettli \cite{oettli} and has been elaborated further by Blum and Oettli \cite{blum}. The equilibrium problem (shortly, EP) is also known as the Ky Fan inequality since Fan \cite{fan} gave the first existence result of solutions of the EP. Thanks to its wide applications, many results concerning the existence
of solutions for equilibrium problems have been established and generalized by a number of authors (e.g., see \cite{kassay,mosco,jafari,yuan} and the references therein). One of the most interesting and important problems in the equilibrium problem
theory is the study of efficient iterative algorithms for finding approximate
solutions, and the convergence analysis of algorithms. Serveral methods have been
proposed to solve equilibrium problems in finite and infinite dimensional
spaces (see, e.g., \cite{CH,DMQ,Iusem1,khv,quoc1,quoc,santos1,TTa,TT,vinh} and the references theirein). In \cite{CH,TTa,TT} the authors introduced general iterative schemes based on
the proximal method, the viscosity approximation method and the hybrid method for finding a common element of the set of fixed points of a nonexpansive
mapping and the set of solutions of the equilibrium problem. 
But in
the proximal method we must solve an regularized equilibrium problem at each iteration of the method. This task is not easy. To overcome this difficulty, Antipin \cite{antipin} and Quoc et al. \cite{quoc} replaced the regularized equilibrium problem by two strongly convex optimizations, which seem computationally easier than solving the regularized equilibrium problem in
the proximal method. Their method is known under the name of the extragradient method. The reason is that when the problem (EP) is a variational inequality problem, this method reduces to the classical extragradient method introduced by Korpelevich \cite{kor}. In 2008, Quoc et al. \cite{quoc} extended the extragradient algorithm for Bregman distance case and proved some important results as the foundation for later studies. It was proved that if the bifunction associated with the (EP) is pseudomonotone and satisfies a Lipschitz-type condition
then the extragradient method is weakly convergent in the framework of Hilbert spaces. Since then, many variants of the extragradient algorithm were developed to improve the efficiency of the method, see \cite{hieu1,quoc1,strodiot2,Vu,strodiot3} for a survey.
In most algorithms, at each iteration, it must either solve two strongly convex programming problems or solve one strongly convex programming problem with one additional projection onto the feasible set. There is even an algorithm that solve three strongly convex programming problems at each iteration. Therefore, the evaluation of the subprogram involved in such algorithms is in general very expensive if the bifunctions and the feasible sets have complicated structures. For more details, see for instance \cite{strodiot1,Lyashko,strodiot2,strodiot3}.



Note that the extragradient algorithm must solve two strongly convex programming problems at each iteration. Therefore, their computations are expensive if the bifunctions and the feasible sets have complicated structures. These observations lead us to the following question.\vskip 0.2cm
\vskip 0.2cm
{\bf Question.} {\it Can we improve the extragradient algorithm such that we use only one strongly convex programming problem at each iteration?}

\vskip 0.2cm
In this paper, we give a positive answer to this question. Motivated and inspired by the algorithms in \cite{antipin,malitsky,quoc,santos1}, we will introduce some new algorithms for
solving the EP. The advantage of our methods is that it only requires solving one strongly convex optimization problem or computing one projection onto the feasible set. Besides, the assumptions on $f$ can be relaxed and the convergence is still guaranteed.
Numerical examples are presented to describe the efficiency of the proposed approach.

The rest of the paper is organized as follows. After collecting some definitions and basic results in Section 2, we prove in Section 3 the weak convergence of the proposed algorithm. 
In Section 4, we deal with strong convergence by using strong pseudomonotonicity. The particular case
when the equilibrium problem reduces to the variational inequality problem is given in Section 5.
Finally, in Section 6
we provide some numerical results to illustrate the convergence of our
algorithm and compare it with the previous algorithms.
\section{Preliminaries}
From now on, we will assume that $C$ is a nonempty closed convex subset of a real Hilbert space $H$ and $f: H\times H\to\mathbb{R}\cup\{+\infty\}$ a bifunction such that $C\times C$ is contained in the domain of $f$. Consider the following problem which is known as an equilibrium problem (see Muu and Oettli \cite{oettli} and Blum and Oettli \cite{blum}):
\begin{equation}\label{pt1}
\text{Find $\bar{x}\in C$ such that}\ f(\bar{x},y)\geq 0\ \ \forall y\in C.
\end{equation}

The set of solutions of the EP (\ref{pt1}) will be denoted by ${\rm Sol}(C,f)$, i.e., $${\rm Sol}(C,f):=\{x\in C: f(x,y)\geq 0\ \ \forall y\in C\}.$$

In 2015, Dong et al. \cite{strodiot1} introduced and analyzed the following General Extragradient Algorithm (EGA) for solving the equilibrium problem (\ref{pt1}):
\begin{equation}\label{eq2}
\begin{cases}
x^0\in C,\\
\bar{x}^k =\underset{y\in C}{\textup{argmin}}\bigg\{\alpha_kf(x^k,y)+\frac{1}{2} \| y-x^k\|^2\bigg\},\\
 \tilde{x}^k = \underset{y\in C}{\textup{argmin}}\bigg\{\beta_kf(\bar{x}^k,y)+\frac{1}{2} \| y-\bar{x}^k\|^2\bigg\},\\
 x^{k+1} = \underset{y\in C}{\textup{argmin}}\bigg\{\beta_kf(\tilde{x}^k,y)+\frac{1}{2} \|y-\tilde{x}^k\|^2\bigg\},
\end{cases}
\end{equation}
where $\alpha_k\geq 0$ and $\beta_k>0$.

It is easy to see that when $\alpha_k=0$ for all $k$, Algorithm GEA reduces to the classical extragradient algorithm \cite{antipin,quoc}. 

In 2017, Hieu \cite{hieu1} introduced an extragradient algorithm for a class of strongly pseudomonotone equilibrium problems as follows.
\begin{equation}\label{eq3}
\begin{cases}
x^0\in C,\\
y^{n}=\underset{y\in C}{\textup{argmin}}\bigg\{\lambda_n f(x^n,y)+\dfrac{1}{2}\|y-x^n\|^2\bigg\}, \\
x^{n+1}=\underset{y\in C}{\textup{argmin}}\bigg\{\lambda_n f(y^n,y)+\dfrac{1}{2}\|y-x^{n}\|^2\bigg\},
\end{cases}
\end{equation}
where $\{\lambda_n\}$ is a non-summable and diminishing sequence, i.e.,
\begin{align}\label{eq4}
\lim_{k\to\infty}\lambda_k=0,\ \ \sum_{k=0}^{\infty}\lambda_k=+\infty.
\end{align}

In 2018, Hieu \cite{hieu3} proposed a Popov type algorithm for strongly pseudomonotone equilibrium problems below.
\begin{equation}\label{eq5}
\begin{cases}
x^0,y^0\in C,\\
x^{n+1}=\underset{y\in C}{\textup{argmin}}\bigg\{\lambda_n f(y^n,y)+\dfrac{1}{2}\|y-x^n\|^2\bigg\},\\
y^{n+1}=\underset{y\in C}{\textup{argmin}}\bigg\{\lambda_n f(y^n,y)+\dfrac{1}{2}\|y-x^{n+1}\|^2\bigg\},
\end{cases}
\end{equation}
where $\{\lambda_n\}$ is a nonincreasing sequence satisfying the condition (\ref{eq4}).

Targeting an improvement of the above algorithms, we will introduce the so-called golden ratio algorithm for equilibrium problems in Section \ref{sec:3}.

Now let us start with some concepts and auxiliary results needed in the sequel. Let $H$ be a real Hilbert space endowed with the inner product $\langle .,.\rangle$ and the associated norm $\|.\|$. 
It is easy to see that
\begin{align}\label{hbh}
\|tx+(1-t)y\|^2=t\|x\|^2+(1-t)\|y\|^2-t(1-t)\|x-y\|^2,
\end{align}
for all $x,y\in H$ and for all $t\in\mathbb{R}$.

When $\{x^k\}$ is a sequence in $H$,
we denote strong convergence of $\{x^k\}$ to $x\in H$ by $x^k\to x$ and weak convergence by $x^k\rightharpoonup x$.
Let $C$ be a nonempty closed convex subset of $H$. For every element $x\in H$, there exists a unique nearest point in $C$, denoted by $P_Cx$, that is
$$ ||x-P_Cx||=\min\{||x-y||:\ y\in C\}. $$

The operator $P_C$ is called the \textit{metric projection} of $H$ onto $C$ and some of its properties are summarized in the next lemma, see e.g., \cite{GR84}.
\begin{lemma}\label{hchieu}
	Let $C\subseteq H$ be a closed convex set, $P_C$ fulfils the following:
	\begin{enumerate}
		\item[\textup{(1)}] $\langle x-P_Cx, y-P_Cx\rangle\leq 0$ for all $x\in H$ and $y\in C$;\label{hchieu1}
		\item[\textup{(2)}] $\|P_Cx-y\|^2\leq \|x-y\|^2-\|x-P_Cx\|^2$ for all $x\in H$, $y\in C$.\label{hchieu2}
	\end{enumerate}
\end{lemma}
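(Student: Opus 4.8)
The plan is to establish the variational characterization in part (1) directly from the minimizing property of $P_Cx$, and then to deduce the quasi-nonexpansive estimate in part (2) purely algebraically from (1) by expanding a squared norm. Throughout I use the fact, recalled just above the lemma, that $P_Cx$ is the unique point of $C$ nearest to $x$, so that $\|x-P_Cx\|\le\|x-z\|$ for every $z\in C$.

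For part (1), fix $x\in H$ and $y\in C$. Since $C$ is convex, the point $z_t:=(1-t)P_Cx+ty$ belongs to $C$ for every $t\in[0,1]$. Applying the minimizing property to $z_t$ gives $\|x-P_Cx\|^2\le\|x-z_t\|^2$. Writing $x-z_t=(x-P_Cx)-t(y-P_Cx)$ and expanding the right-hand side, I obtain
\[
0\le -2t\langle x-P_Cx,\,y-P_Cx\rangle+t^2\|y-P_Cx\|^2
\]
for all $t\in(0,1]$. Dividing by $t>0$ and letting $t\to 0^+$ annihilates the quadratic term and yields $\langle x-P_Cx,\,y-P_Cx\rangle\le 0$, which is exactly (1).

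For part (2), I expand $\|x-y\|^2$ around $P_Cx$. Writing $x-y=(x-P_Cx)+(P_Cx-y)$ gives
\[
\|x-y\|^2=\|x-P_Cx\|^2+2\langle x-P_Cx,\,P_Cx-y\rangle+\|P_Cx-y\|^2.
\]
By part (1) applied to the same $x$ and $y$ we have $\langle x-P_Cx,\,P_Cx-y\rangle=-\langle x-P_Cx,\,y-P_Cx\rangle\ge 0$, so dropping this nonnegative term produces
\[
\|P_Cx-y\|^2\le\|x-y\|^2-\|x-P_Cx\|^2,
\]
which is (2).

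There is no genuine obstacle here, as both assertions are classical; the only point requiring care is the limiting argument in part (1). One must invoke the convexity of $C$ to guarantee $z_t\in C$, keep $t>0$ strictly so that division by $t$ is legitimate, and only then send $t\to 0^+$ so that the $t^2$ contribution vanishes. Taking $t=1$ naively would leave a spurious quadratic term and fail to deliver the sharp inequality. Once (1) is established, (2) follows in a single line as shown.
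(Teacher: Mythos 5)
Your proof is correct. Note, however, that the paper does not prove this lemma at all: it is stated as a known property of the metric projection with a pointer to the literature (the reference \cite{GR84} of Goebel and Reich), so there is no in-paper argument to compare against. What you have supplied is the classical variational argument: part (1) via perturbation along the segment $(1-t)P_Cx+ty\in C$, division by $t$, and the limit $t\to 0^+$; part (2) by expanding $\|x-y\|^2$ around $P_Cx$ and discarding the cross term, which is nonnegative by (1). Both steps are carried out correctly, including the two points that genuinely require care (convexity of $C$ to keep $z_t$ feasible, and keeping $t>0$ before passing to the limit), so your self-contained derivation is a valid replacement for the citation.
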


For a proper, convex and lower semicontinuous function $g:H\to(-\infty,\infty]$ and $\gamma>0$, the Moreau envelope of $g$ of parameter $\gamma$ is the convex function

$$ \leftidx{^\gamma}g(x)=\inf_{y\in H}\Big\{g(y)+\dfrac{1}{2\gamma}\|y-x\|^2\Big\}\ \ \forall x\in H. $$

For all $x\in H$, the function $$y\mapsto g(y)+\dfrac{1}{2\gamma}\|y-x\|^2$$ is proper,
strongly convex and lower semicontinuous, thus the infimum is attained, i.e., $\leftidx{^\gamma}g: H\to\mathbb{R}$. 

The unique minimum of
\begin{equation}\label{mor}
y\mapsto g(y)+\dfrac{1}{2}\|y-x\|^2
\end{equation}
is called proximal point of $g$ at $x$ and it is denoted by $\prox_g(x)$. The operator
\begin{align*}
\prox_g(x): H&\to H\\
x&\mapsto \underset{y\in H}{\textup{argmin}}\bigg\{g(y)+\dfrac{1}{2\gamma}\|y-x\|^2\bigg\}
\end{align*}
is well-defined and is said to be the proximity operator of $g$. When $g=\iota_C$ (the indicator function of the convex set $C$), one has 
$$\prox_{\iota_C}(x)=P_C(x)$$
for all $x\in H$.

We also recall that the subdifferential of $g: H\to(-\infty,\infty]$ at $x\in H$ is defined as the set of all subgradient of $g$ at $x$:
$$ \partial g(x):=\{w\in H: g(y)-g(x)\geq \langle w,y-x\rangle\ \ \forall y\in H\}.$$

The normal cone of $C$ at $x\in C$ is defined by
$$ N_C(x):=\{q\in H: \langle q,y-x\rangle\leq 0 \ \forall y\in C\}. $$

We now recall classical concepts of monotonicity for nonlinear operators.
\begin{definition}\label{mono1}\upshape (see \cite{kamada}) An operator $A: C\to H$ is said to be
	\begin{enumerate}[label=(\alph*),leftmargin=1.2\parindent]
		\item [\upshape (1)] monotone on $C$  if
		$$ \langle Ax-Ay,x-y\rangle\geq 0\ \ \forall x,y\in C. $$
		\item [\upshape (2)] pseudomonotone on $C$ if
		$$ \langle Ax,y-x\rangle\geq 0\Longrightarrow \langle Ay,x-y\rangle\leq 0\ \ \forall x,y\in C. $$
		\item [\upshape (3)] strongly pseudomonotone on $C$ with modulus $\gamma>0$ if there exists $\gamma>0$ such that for any $x,y\in C$
		\begin{equation}
		\langle Ax,y-x\rangle\geq 0\Longrightarrow \langle Ay,x-y\rangle\leq -\gamma\|x-y\|^2\ \ \forall x,y \in C.
		\end{equation}
	\end{enumerate}
\end{definition}

Analogous to Definition \ref{mono1}, we have the following concepts for equilibrium problems.
\begin{definition}\label{mono2}\upshape (see \cite{dinh}) The bifunction $f: H\times H\to \mathbb{R} \cup\{+ \infty\}$ is said to be
	\begin{enumerate}[label=(\alph*),leftmargin=1.2\parindent]
		\item [\upshape (1)] monotone on $C$ if
		$$ f(x,y)+f(y,x)\leq 0\ \ \forall x,y\in C. $$
		\item [\upshape (2)] pseudomonotone on $C$ if
		$$ f(x,y)\geq 0\Longrightarrow f(y,x)\leq 0\ \ \forall x,y\in C. $$
		\item[\textup{(3)}] strongly pseudomonotone on $C$ with modulus $\gamma>0$ if there exists $\gamma>0$ such that for any $x,y\in C$
		$$ f(x,y)\geq 0\Longrightarrow f(y,x)\leq -\gamma \|x-y\|^2. $$
	\end{enumerate}
\end{definition}
\begin{remark}\upshape
	It is obvious that if $A: C\to H$ is monotone (pseudomonotone) on $C$ in the sense of Definition \ref{mono1} then the corresponding bifunction defined by $f(x,y)= \langle Ax,y-x\rangle$ is monotone (pseudomonotone) on $C$ in the sense of Definition \ref{mono2}.
\end{remark}
\begin{example}\upshape
	Suppose that $H=L^2([0,1])$ with the inner product
	$$\langle x,y\rangle:=\int_{0}^{1}x(t)y(t)dt,\ \forall x,y\in H $$
	and the induced norm
	$$\| x\|:=\bigg(\int_{0}^{1}|x(t)|^2dt\bigg)^{\frac{1}{2}},\ \forall x\in H.$$
	
	Let us set
	$$ C=\{x\in H: \|x\|\leq 1\},\ \ f(x,y)=\bigg\langle \dfrac{x}{1+\|x\|^2},y-x\bigg\rangle.$$
	
	We now show that $f$ is strongly pseudomonotone on $C$. Indeed, let $x,y\in C$ be such that $f(x,y)=\Big\langle \dfrac{x}{1+\|x\|^2},y-x\Big\rangle\geq 0$. This implies that $\langle x, y-x\rangle\geq 0$. Consequently,
	\begin{align*}
	f(y,x)&=\bigg\langle \dfrac{y}{1+\|y\|^2},x-y\bigg\rangle\\
	&\leq\dfrac{1}{1+\|y\|^2}(\langle y, x-y\rangle-\langle x, x-y\rangle)\\
	&\leq-\dfrac{1}{2}\|x-y\|^2\\
	&=-\gamma\|x-y\|^2,
	\end{align*}
	where $\gamma:=\frac{1}{2}>0$.
	
	On the other hand, $f$ are neither strongly monotone nor monotone on $C$. To see this, we take $x=\sqrt{3t}$, $y=\sqrt{2t}$ and see that
	\begin{align*}
	f(x,y)+f(y,x)&=\Big\langle \dfrac{y}{1+\|y\|^2}-\dfrac{x}{1+\|x\|^2},x-y\Big\rangle\\
	&=\bigg\langle \dfrac{\sqrt{2t}}{2}-\dfrac{2\sqrt{3t}}{5},\sqrt{3t}-\sqrt{2t}\bigg\rangle\\
	&=\dfrac{1}{2}\bigg(\dfrac{\sqrt{2}}{2}-\dfrac{2\sqrt{3}}{5}\bigg)(\sqrt{3}-\sqrt{2})>0.
	\end{align*}
	
\end{example}

Before concluding this section, we recall the following lemmas which will be useful for proving the convergence results of this paper.

\begin{lemma}\label{lem1}\textup{(\cite{Io1})}
	Let $g:H\to\mathbb{R}\cup\{+\infty\}$ be proper, lower semicontinuous, and convex and let $C$ be a nonempty closed and convex subset of $H$. Assume either that $f$ is continuous at some point of $C$, or that there is an interior point of $C$ where $f$ is finite. 
 Then, $x^*\in C$
	is a solution of the convex optimization problem
	$$\min\{g(x) : x\in C\}$$ if and only if $$0\in\partial g(x^*)+N_C(x^*).$$
\end{lemma}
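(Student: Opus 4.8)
The plan is to convert the constrained minimization into an unconstrained one and then combine Fermat's rule with a subdifferential sum rule. First I would introduce the indicator function $\iota_C$ of the feasible set and record the elementary identity
$$\min\{g(x):x\in C\}=\min_{x\in H}\big(g+\iota_C\big)(x),$$
so that $x^*\in C$ solves the constrained problem if and only if $x^*$ is an unconstrained global minimizer of the proper, convex, lower semicontinuous function $h:=g+\iota_C$. Note that the stated qualification guarantees $h$ is proper, so this reduction is legitimate.

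Next I would invoke Fermat's rule for convex functions, which follows directly from the definition of the subdifferential recalled in the preliminaries: the condition $0\in\partial h(x^*)$ reads $h(y)-h(x^*)\geq\langle 0,y-x^*\rangle=0$ for every $y\in H$, which is exactly the assertion that $x^*$ minimizes $h$ over $H$. Hence $x^*$ is optimal if and only if $0\in\partial(g+\iota_C)(x^*)$, and this equivalence requires nothing beyond convexity.

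The heart of the argument is the Moreau--Rockafellar sum rule: under the assumed qualification (that $g$ be continuous at some point of $C$, or that $C$ admit an interior point at which $g$ is finite) one has the exact splitting $\partial(g+\iota_C)(x^*)=\partial g(x^*)+\partial\iota_C(x^*)$. I would then identify the subdifferential of the indicator with the normal cone: for $x^*\in C$ the defining inequality $\iota_C(y)-\iota_C(x^*)\geq\langle w,y-x^*\rangle$ holds automatically when $y\notin C$ and reduces to $\langle w,y-x^*\rangle\leq 0$ when $y\in C$, so $\partial\iota_C(x^*)=N_C(x^*)$. Chaining the three steps gives $0\in\partial(g+\iota_C)(x^*)=\partial g(x^*)+N_C(x^*)$, as claimed.

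I expect the only genuine obstacle to be the nontrivial inclusion $\partial(g+\iota_C)(x^*)\subseteq\partial g(x^*)+\partial\iota_C(x^*)$, which is precisely where the constraint qualification is indispensable and which underlies the necessity (``only if'') direction; the reverse inclusion $\partial g(x^*)+\partial\iota_C(x^*)\subseteq\partial(g+\iota_C)(x^*)$, and hence the sufficiency (``if'') direction, is immediate from the definition of the subdifferential and needs no hypothesis. Since the qualification is assumed in the statement, I would simply cite Moreau--Rockafellar for the additive splitting rather than reprove it, and remark that the continuity/interior-point condition is exactly what forces the subdifferential of the sum to decompose.
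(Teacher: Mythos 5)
Your proposal is correct. Note that the paper does not prove this lemma at all --- it is stated as a known result with a citation to Ioffe--Tikhomirov --- and your argument (reduction to the unconstrained problem via $h=g+\iota_C$, Fermat's rule, the Moreau--Rockafellar sum rule under the stated qualification, and the identification $\partial\iota_C(x^*)=N_C(x^*)$) is precisely the standard proof that the citation points to, including the correct observation that only the ``only if'' direction needs the constraint qualification.
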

\begin{lemma} {\upshape (See \cite{Ta1})}\label{tan-xu}
	Assume that $\{a_k\}$ and $\{b_k\}$ are two sequences of non-negative numbers such that $$a_{k+1} \leq a_k+b_k \ \forall k \in \mathbb{N}.$$
	 If $\sum_{k=1}^{\infty}b_k<\infty$ then $\lim_{k\to \infty}a_k$ exists.
\end{lemma}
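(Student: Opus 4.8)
The plan is to deduce the existence of $\lim_{k\to\infty} a_k$ from the monotone convergence theorem for sequences of real numbers. The only difficulty is that $\{a_k\}$ need not itself be monotone: the hypothesis $a_{k+1}\le a_k+b_k$ permits $a_k$ to increase, but at step $k$ only by the summable amount $b_k$. The idea is therefore to subtract off the accumulated increments, turning the ``almost monotone'' sequence into a genuinely monotone auxiliary one for which convergence is automatic.

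Concretely, I would set $c_k := a_k - \sum_{j=1}^{k-1} b_j$ (with the empty sum equal to $0$ when $k=1$). Using the recursion and cancelling the common partial sum, one checks directly that
$$ c_{k+1} = a_{k+1} - \sum_{j=1}^{k} b_j \le a_k + b_k - \sum_{j=1}^{k} b_j = a_k - \sum_{j=1}^{k-1} b_j = c_k, $$
so $\{c_k\}$ is non-increasing. Since every $a_k\ge 0$ and $\sum_{j=1}^{\infty} b_j =: S <\infty$, we have $c_k \ge -\sum_{j=1}^{k-1} b_j \ge -S$, so $\{c_k\}$ is bounded below. By the monotone convergence theorem, $\{c_k\}$ converges to some $c\in\mathbb{R}$.

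Finally I would recover the limit of $\{a_k\}$ from the identity $a_k = c_k + \sum_{j=1}^{k-1} b_j$: because $\sum_j b_j$ converges, its partial sums tend to $S$, whence $\lim_{k\to\infty} a_k = c + S$ exists. An equally short alternative is to work with the tail sequence $c_k := a_k + \sum_{j\ge k} b_j$, which is non-increasing by the same computation and bounded below by $0$; since the tail $\sum_{j\ge k} b_j \to 0$, one again obtains $\lim a_k = \lim c_k$. There is no genuine obstacle beyond identifying the correct compensating auxiliary sequence; once it is written down, monotone convergence does all the work, and the non-negativity of both sequences is used only to secure the lower bound.
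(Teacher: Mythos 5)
Your proof is correct. Note, however, that the paper itself offers no proof of this lemma: it is imported verbatim from Tan and Xu \cite{Ta1}, so there is no internal argument to compare against. Your compensated sequence $c_k = a_k - \sum_{j=1}^{k-1} b_j$ is non-increasing and bounded below by $-S$, hence convergent, and adding back the convergent partial sums recovers $\lim_k a_k$; every step checks out, and the non-negativity hypotheses are used exactly where you say they are. Your second variant, $c_k = a_k + \sum_{j\ge k} b_j$, is in fact the form closest to the classical argument: iterating the hypothesis gives $a_{k+m} \le a_k + \sum_{j=k}^{k+m-1} b_j$, whence $\limsup_{m\to\infty} a_m \le a_k + \sum_{j\ge k} b_j$ for every $k$, and letting $k\to\infty$ (the tail vanishes) yields $\limsup_m a_m \le \liminf_k a_k$, so the limit exists. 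Both routes are elementary and essentially equivalent; your first decomposition has the small aesthetic advantage of invoking monotone convergence for a single explicitly monotone sequence rather than juggling upper and lower limits.
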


\begin{lemma}\label{opi} \textup{(Opial \cite{opial})}
	Let $H$ be a real Hilbert space and $\{x^k\}$ a sequence in $H$ such that there exists a nonempty closed set $S\subset H$ satisfying
	\begin{enumerate}[label=(\alph*),leftmargin=1.2\parindent]
		\item [\upshape (1)] For every $z\in S$, $\lim\limits_{k\to\infty}\|x^k-z\|$ exists;
		\item [\upshape (2)] Any weak cluster point of $\{x^k\}$ belongs to $S$.
	\end{enumerate}
	Then, there exists $\bar{x}\in S$ such that $\{x^k\}$ converges weakly to $\bar{x}$.
\end{lemma}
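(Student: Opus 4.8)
The plan is to exploit the two hypotheses to isolate a \emph{unique} weak cluster point of $\{x^k\}$, and then conclude weak convergence to it by a standard reflexivity argument. Notice that the set $S$ is only assumed closed (not convex), and indeed convexity will not be needed anywhere.

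First I would establish boundedness. Since $S$ is nonempty, fix any $z_0\in S$; by hypothesis (1) the limit $\lim_{k\to\infty}\|x^k-z_0\|$ exists and is therefore finite, so $\{x^k\}$ is bounded. Because $H$ is a Hilbert space, hence reflexive, boundedness guarantees that $\{x^k\}$ admits at least one weakly convergent subsequence, so the set of weak cluster points is nonempty; by hypothesis (2) every such cluster point lies in $S$.

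The heart of the argument is to show this cluster point is unique. Suppose $x^{k_i}\rightharpoonup \bar{x}$ and $x^{m_j}\rightharpoonup \bar{y}$ with $\bar{x},\bar{y}\in S$. Expanding the inner product gives, for every $k$,
$$\|x^k-\bar{x}\|^2-\|x^k-\bar{y}\|^2=2\langle x^k,\bar{y}-\bar{x}\rangle+\|\bar{x}\|^2-\|\bar{y}\|^2.$$
Since $\bar{x},\bar{y}\in S$, hypothesis (1) ensures that both $\lim_{k}\|x^k-\bar{x}\|$ and $\lim_{k}\|x^k-\bar{y}\|$ exist, so the left-hand side converges; hence $\lim_{k}\langle x^k,\bar{y}-\bar{x}\rangle$ exists. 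Evaluating this single limit along the two subsequences and using weak convergence (so that $\langle x^{k_i},\bar{y}-\bar{x}\rangle\to\langle \bar{x},\bar{y}-\bar{x}\rangle$ and $\langle x^{m_j},\bar{y}-\bar{x}\rangle\to\langle \bar{y},\bar{y}-\bar{x}\rangle$) forces
$$\langle \bar{x},\bar{y}-\bar{x}\rangle=\langle \bar{y},\bar{y}-\bar{x}\rangle,$$
that is, $\|\bar{x}-\bar{y}\|^2=0$, so $\bar{x}=\bar{y}$.

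Finally, a bounded sequence in a reflexive space possessing a unique weak cluster point converges weakly to that point: if it did not converge weakly to $\bar{x}$, some subsequence would remain outside a weak neighbourhood of $\bar{x}$, yet being bounded it would admit a further subsequence converging weakly to a cluster point different from $\bar{x}$, contradicting uniqueness. Therefore $x^k\rightharpoonup \bar{x}\in S$, which is the desired conclusion. I expect the uniqueness step to be the main obstacle, since everything rests on converting hypothesis (1) into the existence of $\lim_{k}\langle x^k,\bar{y}-\bar{x}\rangle$ through the displayed algebraic identity; the boundedness and final reflexivity steps are routine.
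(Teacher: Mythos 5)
Your proof is correct. Note that the paper does not prove this lemma at all --- it is quoted as a classical result with a citation to Opial --- so there is no in-paper argument to compare against; what you have written is precisely the standard proof of Opial's lemma: boundedness from hypothesis (1), nonemptiness of the set of weak cluster points by reflexivity, uniqueness of the cluster point via the identity $\|x^k-\bar{x}\|^2-\|x^k-\bar{y}\|^2=2\langle x^k,\bar{y}-\bar{x}\rangle+\|\bar{x}\|^2-\|\bar{y}\|^2$ together with hypothesis (1) applied at $\bar{x},\bar{y}\in S$, and the routine subsequence extraction to upgrade a unique weak cluster point of a bounded sequence to weak convergence. All steps are sound, and you correctly identify that neither convexity of $S$ nor any structure beyond the Hilbert (or even reflexive) setting is needed.
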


\section{Golden ratio algorithm for equilibrium problems}\label{sec:3}
\subsection{The algorithm}
In what follows, the following usual conditions will be used:
	\begin{enumerate}[label=(\alph*),leftmargin=1.65\parindent]
		\item[\textup{(A1)}] $f(x,x)=0$ for all $x\in C$;
		\item[\textup{(A2)}] $f$ is pseudomonotone on $C$;
		\item[\textup{(A3)}] For any arbitrary sequence $\{z^k\}$ such that $z^{k}\rightharpoonup z$, if $\underset{k\to\infty}\limsup f(z^{k},y)\geq 0$ for all $y\in C$ then $z\in {\rm Sol}(C,f)$;
		\item[\textup{(A4)}]$f(x,\cdot)$ is lower semicontinuous convex and subdifferentiable on $C$ for every $x\in C$;
		\item[\textup{(A5)}] There exist positive numbers $c_1$ and $c_2$ such that the Lipschitz-type condition
		\begin{equation} \label{Lip1}
		f(x,y)+f(y,z)\geq f(x,z)-c_1\|x-y\|^2-c_2\|y-z\|^2
		\end{equation}
		holds for all $x,y,z \in C$;
		\item[\textup{(A6)}]Either ${\rm int} C \not=\emptyset$ or $f(x,.)$ is continuous at some point of $C$ for every $x\in C$;
		\item[\textup{(A7)}] For all bounded sequences $\{x^k\}, \{y^k\}\subset C$ such that $\|x^k-y^{k}\|\to 0$, the inequality
		
		\[
		\limsup_{k\to\infty}f(x^k,y^{k})\ge 0
		\]
		holds;
		\item[\textup{(A8)}] The solution set ${\rm Sol}(C,f)\not=\emptyset$;		
		\item[\textup{(A9)}] $f$ is strongly pseudomonotone on $C$ with modulus $\gamma>0$.		
			\end{enumerate}
\begin{remark}\upshape If $f(x,y)=\langle Ax,y-x\rangle$, where $A: C\to H$ is Lipschitz continuous with constant $L>0$ then
		$f$ satisfies the inequality (\ref{Lip1}) with constants $c_1=c_2=\frac{L}{2}$. Indeed, for each $x,y,z\in C$, we have
		\begin{align*}
		f(x,y)+f(y,z)-f(x,z)&=\langle Ax,y-x\rangle+\langle Ay,z-y\rangle-\langle Ax,z-x\rangle\\
		&=-\langle Ay-Ax,y-z\rangle\\
		&\geq -\|Ax-Ay\|\big\|y-z\big\|\\
		&\geq -L\|x-y\|\big\|y-z\big\|\\
		&\geq -\dfrac{L}{2}\|x-y\|^2-\dfrac{L}{2}\|y-z\|^2\\
		&=-c_1\|x-y\|^2-c_2\|y-z\|^2.
		\end{align*}
		
		Thus $f$ satisfies the inequality (\ref{Lip1}).
		\end{remark}

\begin{remark}\upshape It is easy to see that if $f(.,y)$ is weakly upper semicontinuous for all $y\in C$ then $f$ satisfies the condition (A3), which was first introduced by Khatibzadeh and Mohebbi in \cite{khatibzadeh}. However, the converse is not true in general. To see this, we consider the following counterexample in \cite{khv,khatibzadeh}.
\end{remark}
\begin{example}\upshape
	Let $H=\ell^2$, $C=\{\xi=(\xi_1,\xi_2,...)\in \ell^2:\xi_i\geq 0\ \ \forall i=1,2,...\}$ and
	$$ f(x,y)=(y_1-x_1)\sum_{i=1}^{\infty}(x_i)^2. $$
	
	Take $x^k=(0,...,0,\underset{k}1,0,...)$, we have $x^k\rightharpoonup x=(0,...,0,...)$ and $x\in {\rm Sol}(C,f)$. Obviously, there is a $y\in C$ such that
	$$ \limsup_{k\to\infty}f(x^k,y)>0=f(x,y). $$
	
	Then $f(.,y)$ is not weakly upper semicontinuous. We now show that $f$ satisfies the condition (A3). If $z^k=(z_1^k,z_2^k,...)\rightharpoonup z=(z_1,z_2,...)$ is an arbitrary sequence and $\underset{k\to\infty}\limsup f(z^k,y)\geq 0$ for all $y\in C$, then we have
	$$  \limsup_{k\to\infty}(y_1-z_1^k)\sum_{i=1}^\infty (z_i^k)^2\geq 0.$$
	
	Since $\underset{k\to\infty}\lim (y_1-z_1^k)=y_1-z_1$, we get
	$$  (y_1-z_1)\limsup_{k\to\infty}\sum_{i=1}^\infty (z_i^k)^2\geq 0,$$
	thus $y_1\geq z_1$. Hence, $f(z,y)\geq 0$ for all $y\in C$, i.e., $f$ satisfies the condition (A3).
\end{example}

From the above observations, it is clear that our conditions (A3) and (A7) are weaker than the conditions (A4) and (A6) in \cite{Lyashko}, respectively.

\begin{remark}\label{chuy3.2}
	\upshape The condition (A7) was introduced by Kassay et al. \cite{khv}. Under (A1), we will show that the assumption (A7) is weaker than the one below, which was considered in \cite{hieu1,hieu2,strodiot2,strodiot3} (see also the references therein).
	\begin{enumerate}[label=(\alph*),leftmargin=1.85\parindent]
		\item[(A7')] $f$ is jointly weakly lower semicontinuous on the product $C\times C$.
	\end{enumerate}

Indeed, to prove that (A7') implies (A7), let $\{x^k\}$, $\{y^k\}$ be bounded sequences in $C$ with $\|x^k-y^{k}\|\to 0$. Thus there exists a subsequence $\{x^{k_l}\}$ of $\{x^k\}$ converging weakly to $\bar{x}\in C$. By the assumption, the subsequence $\{y^{k_l}\}$ converges weakly to the same $\bar{x}$. Hence,
	
	\[
	\limsup_{k\to\infty}f(x^k,y^{k})\ge \limsup_{l\to\infty}f(x^{k_l},y^{k_l})\ge \liminf_{l\to\infty}f(x^{k_l},y^{k_l})\ge f(\bar{x}, \bar{x}) = 0.
	\]
	
\end{remark}

We are now in a position to  describe a new algorithm for pseudomonotone equilibrium problems.
\begin{algorithm}[H]
	\caption{(Golden ratio algorithm for equilibrium problems)}\label{alg1}
	{\bf Initialization:} Let $\varphi=\frac{\sqrt{5}+1}{2}$ be the golden ratio, i.e., $\varphi^2=\varphi+1$. Choose the parameter $\lambda$ such that
	\begin{align}
	0<\lambda\leq\min\bigg\{\frac{\varphi}{4c_1}, \frac{\varphi}{4c_2}\bigg\}.
	\end{align}
	
	Select initial $x^0\in C$, $y^1\in C$ and set $k:=0$.\vskip 0.1cm\noindent
	{\bf Iterative Step:} Given $x^{k-1}$ and $y^k$ {\upshape(}$k\geq 1${\upshape)}, compute
	\begin{align}
	&x^k=\dfrac{(\varphi-1)y^k+x^{k-1}}{\varphi},\label{e7}\\
	&y^{k+1}={\rm argmin}\Big\{\lambda f(y^k,y)+\dfrac{1}{2}\|y-x^k\|^2: y\in C\Big\}.\label{e8}
	\end{align}
	\vskip 0.1cm\noindent
	{\bf Stopping Criterion:} If $y^{k+1}=y^k=x^k$ then stop. Otherwise, let $k:=k+1$ and return to {\bf Iterative Step}.
\end{algorithm}
\begin{remark}\upshape
	For comparison with algorithms (\ref{eq2}), (\ref{eq3}) and (\ref{eq5}), our Algorithm \ref{alg1} requires, at each iteration, only one strongly convex optimization problem.
\end{remark}

\subsection{Convergence analysis}
We first wish to validate the stoping criterion of Algorithm \ref{alg1}.
\begin{lemma}
	Under the conditions (A1), (A4) and (A6), if $y^{k+1}=y^k=x^k$ then $y^k\in {\rm Sol}(C,f)$.
\end{lemma}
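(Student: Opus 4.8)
The goal is to verify that $f(y^k,y)\ge 0$ for every $y\in C$, which is exactly the statement $y^k\in{\rm Sol}(C,f)$. The natural starting point is the optimality condition satisfied by the proximal step \eqref{e8} defining $y^{k+1}$, combined with the assumption that the iterate has stabilized.

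First I would set $g(y):=\lambda f(y^k,y)+\tfrac12\|y-x^k\|^2$, so that $y^{k+1}=\operatorname{argmin}_{y\in C}g(y)$. Condition (A4) guarantees that $f(y^k,\cdot)$ is proper, convex, lower semicontinuous and subdifferentiable, while (A6) supplies precisely the continuity/interior regularity hypothesis needed to invoke Lemma \ref{lem1}. Applying that lemma yields $0\in\partial g(y^{k+1})+N_C(y^{k+1})$. Since the quadratic term is finite, continuous and differentiable on all of $H$, the subdifferential of the sum splits, and this inclusion becomes
$$0\in\lambda\,\partial_2 f(y^k,y^{k+1})+(y^{k+1}-x^k)+N_C(y^{k+1}),$$
where $\partial_2$ denotes the subdifferential taken in the second argument. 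Hence there exist $w\in\partial_2 f(y^k,y^{k+1})$ and $q\in N_C(y^{k+1})$ with $\lambda w+(y^{k+1}-x^k)+q=0$.

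Next I would invoke the stopping hypothesis $y^{k+1}=y^k=x^k$. The term $y^{k+1}-x^k$ then vanishes, leaving $\lambda w=-q$ with $w\in\partial_2 f(y^k,y^k)$ and $q\in N_C(y^k)$. The subgradient inequality for $f(y^k,\cdot)$ at $y^k$, together with (A1), i.e. $f(y^k,y^k)=0$, gives
$$f(y^k,y)\ge f(y^k,y^k)+\langle w,y-y^k\rangle=\langle w,y-y^k\rangle\qquad\forall y\in C.$$
Substituting $w=-q/\lambda$ and using $\langle q,y-y^k\rangle\le 0$ for all $y\in C$ (the defining property of the normal cone, since $q\in N_C(y^k)$) together with $\lambda>0$, I obtain $\langle w,y-y^k\rangle=-\tfrac1\lambda\langle q,y-y^k\rangle\ge 0$, whence $f(y^k,y)\ge 0$ for all $y\in C$, which is the desired conclusion.

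The only delicate point is the passage from $0\in\partial g(y^{k+1})+N_C(y^{k+1})$ to the additively split inclusion, i.e. justifying that the subdifferential of the sum decomposes into the sum of the subdifferentials. This is exactly where (A4) (subdifferentiability of $f(y^k,\cdot)$) and (A6) (the regularity condition of Lemma \ref{lem1}) enter; because the regularizing term $\tfrac12\|\cdot-x^k\|^2$ is finite and continuous on all of $H$, the Moreau--Rockafellar sum rule applies without any further qualification, and everything else reduces to routine sign bookkeeping.
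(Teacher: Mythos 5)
Your proof is correct and follows essentially the same route as the paper: invoke Lemma \ref{lem1} on the proximal subproblem \eqref{e8}, use the stopping condition $y^{k+1}=y^k=x^k$ to eliminate the quadratic term's contribution, and conclude via the subgradient inequality, (A1), and the sign condition defining $N_C(y^k)$. The only cosmetic difference is ordering (the paper substitutes $x^k=y^k$ into the subproblem before applying the optimality condition, whereas you apply it at $y^{k+1}$ first and then substitute), and you make explicit the Moreau--Rockafellar sum rule that the paper uses tacitly.
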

\begin{proof}
	If $y^{k+1}=y^k=x^k$ then
	$$ y^{k}=\underset{y\in C}{\textup{argmin}}\bigg\{\lambda f(y^k,y)+\dfrac{1}{2}\|y-y^k\|^2\bigg\}.$$
	
	Therefore, from Lemma \ref{lem1} we have $$0\in\partial\bigg[\lambda f(y^k,.)+\frac{1}{2}\|.-y^k\|\bigg](y^k)+N_C(y^k),$$
	i.e., $0\in\partial(\lambda f(y^k,.))(y^k)+N_C(y^k)$,
	which implies that
	$$ \langle u^k,x-y^k\rangle\geq 0\ \ \forall x\in C, $$
	where $u^k\in \partial(f(y^k,.))(y^k)$. By the assumption (A1), we get
	$$ f(y^k,x)=f(y^k,x)-f(y^k,y^k)\geq \langle u^k,x-y^k\rangle\geq 0\ \ \forall x\in C. $$
	
	This means that $y^k\in {\rm Sol}(C,f)$.
\end{proof}

The next statement plays a crucial role in the proof  the convergence result.
\begin{lemma} \label{lem3.2}
	Let $\{x^k\}$ and $\{y^k\}$ be the sequences generated by Algorithm \ref{alg1} and $z\in {\rm Sol}(C,f)$. Under the conditions (A4), (A5) and (A6), the following inequality holds.
	\begin{align*}
	(1+\varphi)\|x^{k+1}-z\|^2+\frac{\varphi}{2}\|y^k-y^{k+1}\|^2&\leq (1+\varphi)\|x^{k}-z\|^2+\frac{\varphi}{2}\|y^{k-1}-y^k\|^2\notag\\
	&\hskip2.5cm-\varphi\|x^k-y^{k}\|^2+2\lambda f(y^k,z).
	\end{align*}
\end{lemma}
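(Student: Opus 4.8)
The plan is to begin from the first-order optimality characterization of the proximal subproblem (\ref{e8}). Since $y^{k+1}$ minimizes $y\mapsto \lambda f(y^k,y)+\tfrac12\|y-x^k\|^2$ over $C$, Lemma \ref{lem1} (its hypotheses are exactly what (A4) and (A6) guarantee) provides a subgradient $w^{k+1}\in\partial f(y^k,\cdot)(y^{k+1})$ with $\langle \lambda w^{k+1}+y^{k+1}-x^k,\,y-y^{k+1}\rangle\ge 0$ for all $y\in C$. Combining this with the subgradient inequality $\langle w^{k+1},y-y^{k+1}\rangle\le f(y^k,y)-f(y^k,y^{k+1})$ produces the working inequality
\[
\langle x^k-y^{k+1},\,y-y^{k+1}\rangle\le \lambda\big(f(y^k,y)-f(y^k,y^{k+1})\big),\qquad y\in C.
\]
I would record this at index $k+1$ with the choice $y=z$, and at the previous index $k$ with the choice $y=y^{k+1}$. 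Note that (A2) is deliberately not used here: the term $f(y^k,z)$ is carried through unchanged, to be handled later by pseudomonotonicity in the main theorem.

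Second, I would eliminate the ``old'' bifunction values. Writing (A5) for the triple $(y^{k-1},y^k,y^{k+1})$ bounds $f(y^{k-1},y^{k+1})-f(y^{k-1},y^k)$ above by $f(y^k,y^{k+1})+c_1\|y^{k-1}-y^k\|^2+c_2\|y^k-y^{k+1}\|^2$. Adding the two optimality inequalities, the terms $\pm\lambda f(y^k,y^{k+1})$ cancel and I obtain
\[
\langle x^k-y^{k+1},z-y^{k+1}\rangle+\langle x^{k-1}-y^k,y^{k+1}-y^k\rangle\le \lambda f(y^k,z)+\lambda c_1\|y^{k-1}-y^k\|^2+\lambda c_2\|y^k-y^{k+1}\|^2.
\]

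Third -- the heart of the computation -- I would convert the left side into squared distances using the averaging step (\ref{e7}). From (\ref{e7}) one reads off $x^{k-1}-y^k=\varphi(x^k-y^k)$, and at the next index $x^k-y^{k+1}=\varphi(x^{k+1}-y^{k+1})$. Substituting, multiplying by $2$, and polarizing both inner products via $2\langle a,b\rangle=\|a\|^2+\|b\|^2-\|a-b\|^2$ turns the left side into a combination of $\|x^{k+1}-y^{k+1}\|^2$, $\|y^{k+1}-z\|^2$, $\|x^{k+1}-z\|^2$, $\|x^k-y^k\|^2$ and $\|y^{k+1}-y^k\|^2$. I would then remove the unwanted $\|y^{k+1}-z\|^2$ by applying the identity (\ref{hbh}) to $x^{k+1}=\frac{(\varphi-1)y^{k+1}+x^k}{\varphi}$ with $t=\frac{\varphi-1}{\varphi}$, expressing $\|y^{k+1}-z\|^2$ through $\|x^{k+1}-z\|^2$, $\|x^k-z\|^2$ and $\|x^k-y^{k+1}\|^2=\varphi^2\|x^{k+1}-y^{k+1}\|^2$. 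The crucial feature is that the golden-ratio relations $\varphi^2=\varphi+1$ and $\varphi-1=1/\varphi$ make every $\|x^{k+1}-y^{k+1}\|^2$ contribution cancel and collapse the coefficient to $\frac{\varphi}{\varphi-1}=1+\varphi$ in front of both $\|x^{k+1}-z\|^2$ and $\|x^k-z\|^2$, leaving exactly
\[
(1+\varphi)\|x^{k+1}-z\|^2+(\varphi-2\lambda c_2)\|y^{k+1}-y^k\|^2\le (1+\varphi)\|x^k-z\|^2+2\lambda c_1\|y^{k-1}-y^k\|^2-\varphi\|x^k-y^k\|^2+2\lambda f(y^k,z).
\]

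Finally, I would invoke the step-size bound $0<\lambda\le\min\{\varphi/(4c_1),\varphi/(4c_2)\}$: since $2\lambda c_2\le\varphi/2$, the coefficient $\varphi-2\lambda c_2$ on the left is at least $\varphi/2$, and since $2\lambda c_1\le\varphi/2$, the term $2\lambda c_1\|y^{k-1}-y^k\|^2$ on the right is at most $\tfrac\varphi2\|y^{k-1}-y^k\|^2$. Both replacements only weaken the inequality (shrinking the left side, enlarging the right), yielding precisely the claimed estimate. I expect the main obstacle to be the bookkeeping in the third step: keeping the polarization signs consistent and inserting $\varphi^2=\varphi+1$ and $\varphi-1=1/\varphi$ at the right moments so that the spurious $\|x^{k+1}-y^{k+1}\|^2$ and $\|y^{k+1}-z\|^2$ terms vanish and the coefficient condenses to $1+\varphi$.
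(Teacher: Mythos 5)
Your proposal is correct and follows essentially the same route as the paper's proof: the two prox-optimality/subgradient inequalities, the golden-ratio relation $x^{k-1}-y^k=\varphi(x^k-y^k)$, polarization of the inner products, the identity (\ref{hbh}) (in the form of the paper's (\ref{e13})) to eliminate $\|y^{k+1}-z\|^2$, and the Lipschitz-type condition (A5) combined with the step-size restriction $2\lambda c_i\le\varphi/2$. The only differences are cosmetic bookkeeping: you also rescale the first inner product via $x^k-y^{k+1}=\varphi(x^{k+1}-y^{k+1})$ and postpone the step-size bound to the very end, whereas the paper keeps $\langle x^k-y^{k+1},z-y^{k+1}\rangle$ unscaled and absorbs the bound already in its inequality (\ref{17}).
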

\begin{proof}
	From $y^{k+1}=\underset{y\in C}{\textup{argmin}}\bigg\{\lambda f(y^k,y)+\dfrac{1}{2}\|y-x^k\|^2\bigg\}$ and Lemma \ref{lem1}, we have 
	$$ 0=\lambda g^k+y^{k+1}-x^k+q, $$
	where $g^k\in \partial f(y^k,.)(y^{k+1})$ and $q\in N_C(y^{k+1})$. Since 
	$$  N_C(y^{k+1})=\{q\in H: \langle q, y-y^{k+1}\rangle\leq 0\ \forall y\in C\},$$
	we have
	$$ \langle y^{k+1}-x^k+\lambda g^k,z-y^{k+1}\rangle\geq 0. $$
	
	Consequently,
\begin{align}\label{11} \langle x^k-y^{k+1},z-y^{k+1}\rangle\leq \lambda \langle g^k,z-y^{k+1}\rangle\leq \lambda (f(y^k,z)-f(y^k,y^{k+1}))
\end{align}
and
\begin{align}\label{12}
\langle x^{k-1}-y^{k},y^{k+1}-y^{k}\rangle\leq \lambda (f(y^{k-1},y^{k+1})-f(y^{k-1},y^{k})).
\end{align}

Combining (\ref{12}) with the fact that 
\begin{align}\label{14-1}
y^k-x^{k-1}=\frac{1+\varphi}{\varphi}(y^k-x^k)=\varphi(y^k-x^k)
\end{align}
 we obtain
\begin{align}\label{14}
\langle \varphi(x^k-y^k),y^{k+1}-y^{k}\rangle\leq \lambda (f(y^{k-1},y^{k+1})-f(y^{k-1},y^{k})).
\end{align}

Summing up (\ref{12}) and (\ref{14}) we get
\begin{align}\label{15}
\langle x^k-y^{k+1},z-y^{k+1}\rangle+\langle \varphi(x^k-y^k),y^{k+1}-y^{k}\rangle&\leq \lambda \big[f(y^{k-1},y^{k+1})-f(y^{k-1},y^{k})\notag\\
&\hskip2cm-f(y^k,y^{k+1})\big]+\lambda f(y^k,z).
\end{align}

Using the identity 
$$\langle a,b\rangle=\frac{1}{2}\big[\|a\|^2+\|b\|^2-\|a-b\|^2\big]$$
we have from (\ref{15}) that
	\begin{align}\label{16}
	\|y^{k+1}-z\|^2&\leq \|x^{k}-z\|^2-\|x^k-y^{k+1}\|^2-\varphi\big[\|x^k-y^{k}\|^2+\|y^{k+1}-y^k\|^2-\|y^{k+1}-x^k\|^2\big]\notag\\
	&\hskip2.5cm+2\lambda \big[f(y^{k-1},y^{k+1})-f(y^{k-1},y^{k})-f(y^k,y^{k+1})\big]+2\lambda f(y^k,z).
	\end{align}
	
	By the assumption (A5), we get
	\begin{align}\label{17}
2\lambda[f(y^{k-1},y^{k+1})-f(y^{k-1},y^{k})-f(y^k,y^{k+1})] 
	&\leq 2\lambda[c_1\|y^{k-1}-y^k\|^2+c_2\|y^k-y^{k+1}\|^2]\notag\\
	&\leq \frac{\varphi}{2}\big[\|y^{k-1}-y^k\|^2+\|y^k-y^{k+1}\|^2\big].
\end{align}
	
	On the other hand, it can be easily seen from (\ref{14-1}) that $y^{k+1}=(1+\varphi)x^{k+1}-\varphi x^{k}$. Hence, we have from (\ref{hbh}) that
	\begin{align}\label{e13}	
	\|y^{k+1}-z\|^2&=(1+\varphi)\|x^{k+1}-z\|^2-\varphi\|x^{k}-z\|^2+\varphi(1+\varphi)\|x^{k+1}-x^k\|^2\notag\\
	&=(1+\varphi)\|x^{k+1}-z\|^2-\varphi\|x^{k}-z\|^2+\frac{1}{\varphi}\|y^{k+1}-x^k\|^2.
	\end{align}
	
	It follows from (\ref{16}), (\ref{17}) and (\ref{e13}) that
	\begin{align*}
(1+\varphi)\|x^{k+1}-z\|^2&\leq (1+\varphi)\|x^{k}-z\|^2-\varphi\big[\|x^k-y^{k}\|^2+\|y^{k+1}-y^k\|^2\big]\notag\\
	&\hskip0.9cm+\frac{\varphi}{2}\big[\|y^{k-1}-y^k\|^2+\|y^k-y^{k+1}\|^2\big]+2\lambda f(y^k,z)
	\end{align*}
or equivalently,
\begin{align}\label{19}
(1+\varphi)\|x^{k+1}-z\|^2+\frac{\varphi}{2}\|y^k-y^{k+1}\|^2&\leq (1+\varphi)\|x^{k}-z\|^2+\frac{\varphi}{2}\|y^{k-1}-y^k\|^2\notag\\
&\hskip2.5cm-\varphi\|x^k-y^{k}\|^2+2\lambda f(y^k,z).
\end{align}
	
	The proof is complete.
\end{proof}

At this point, we can prove the following weak convergence theorem.
\begin{theorem}\label{th3.1}
	Let $f: H\times H\to\mathbb{R}\cup\{+\infty\}$ be a bifunction satisfying the assumptions (A1)-(A8). Then the sequence $\{x^k\}$ generated by Algorithm \ref{alg1} converges weakly to an solution of the EP (\ref{pt1}).
\end{theorem}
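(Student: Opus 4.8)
The plan is to apply Opial's lemma (Lemma~\ref{opi}) with $S={\rm Sol}(C,f)$, so the proof splits into verifying its two hypotheses: that $\lim_{k\to\infty}\|x^k-z\|$ exists for every $z\in S$, and that every weak cluster point of $\{x^k\}$ lies in $S$. First I would exploit the key estimate from Lemma~\ref{lem3.2}. Setting
\[
a_k:=(1+\varphi)\|x^k-z\|^2+\tfrac{\varphi}{2}\|y^{k-1}-y^k\|^2,
\]
the inequality (\ref{19}) reads $a_{k+1}\le a_k-\varphi\|x^k-y^k\|^2+2\lambda f(y^k,z)$. Since $z\in{\rm Sol}(C,f)$ we have $f(z,y^k)\ge 0$, and pseudomonotonicity (A2) then gives $f(y^k,z)\le 0$. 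Hence $a_{k+1}\le a_k-\varphi\|x^k-y^k\|^2\le a_k$, so $\{a_k\}$ is nonincreasing and bounded below, thus convergent.

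From the convergence of $\{a_k\}$ two consequences follow. Because $a_k$ converges and the dropped term is nonnegative, summing the inequality yields $\sum_k\|x^k-y^k\|^2<\infty$, whence $\|x^k-y^k\|\to 0$. Boundedness of $\{a_k\}$ gives boundedness of $\{x^k\}$ and of $\|y^{k-1}-y^k\|$, and together with $\|x^k-y^k\|\to 0$ also boundedness of $\{y^k\}$. To extract the existence of $\lim_k\|x^k-z\|^2$ itself (not merely of the combination $a_k$), I would note that the recursion (\ref{14-1})--(\ref{e13}) links $\|x^{k+1}-x^k\|$ to $\|y^{k+1}-x^k\|$, and since $\|x^k-y^k\|\to 0$ while $\|y^{k-1}-y^k\|$ stays controlled, the auxiliary term $\tfrac{\varphi}{2}\|y^{k-1}-y^k\|^2$ must itself converge; indeed $\sum\|x^k-y^k\|^2<\infty$ forces $\|x^k-y^k\|\to0$, and the geometry of (\ref{e7}) then shows $\|x^{k+1}-x^k\|\to0$ and $\|y^{k-1}-y^k\|\to 0$. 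Consequently $\lim_k\|x^k-z\|^2=\tfrac{1}{1+\varphi}\lim_k a_k$ exists, giving hypothesis~(1) of Opial.

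For hypothesis~(2), let $\bar x$ be any weak cluster point of $\{x^k\}$, say $x^{k_j}\rightharpoonup\bar x$. Since $\|x^k-y^k\|\to 0$ we also have $y^{k_j}\rightharpoonup\bar x$, and $\bar x\in C$ because $C$ is weakly closed. The goal is to show $\bar x\in{\rm Sol}(C,f)$ using condition (A3): it suffices to prove $\limsup_{j\to\infty}f(y^{k_j},y)\ge 0$ for every $y\in C$. To obtain this I would return to the optimality characterization of $y^{k+1}$ via Lemma~\ref{lem1}, as in (\ref{11}), which for an arbitrary $y\in C$ yields
\[
\lambda f(y^k,y)\ge \lambda f(y^k,y^{k+1})+\langle x^k-y^{k+1},y-y^{k+1}\rangle.
\]
Using $\|x^k-y^{k+1}\|\to 0$ (which follows from $\|x^k-y^k\|\to0$ and $\|y^k-y^{k+1}\|\to0$), the inner-product term vanishes along the subsequence, and the term $f(y^k,y^{k+1})$ is handled by (A7) applied to the bounded sequences $\{y^k\}$, $\{y^{k+1}\}$ with $\|y^k-y^{k+1}\|\to0$, giving $\limsup f(y^{k},y^{k+1})\ge 0$. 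Dividing by $\lambda>0$ and passing to the limsup along $\{k_j\}$ then delivers $\limsup_j f(y^{k_j},y)\ge 0$ for all $y\in C$, so (A3) yields $\bar x\in{\rm Sol}(C,f)$.

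The main obstacle I anticipate is the bookkeeping in hypothesis~(1): the estimate in Lemma~\ref{lem3.2} controls only the composite quantity $a_k$, not $\|x^k-z\|^2$ in isolation, so one must separately argue that the correction term $\|y^{k-1}-y^k\|^2\to 0$ before Opial's first condition can be claimed. This hinges on squeezing out $\sum_k\|x^k-y^k\|^2<\infty$ and then translating it through the golden-ratio identity (\ref{14-1}) into decay of the successive differences $\|y^{k-1}-y^k\|$ and $\|x^{k+1}-x^k\|$; getting this telescoping argument clean is the delicate part, whereas the cluster-point analysis is a fairly standard application of (A3) and (A7).
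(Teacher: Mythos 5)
Your proposal is correct and follows essentially the same route as the paper: the energy estimate of Lemma~\ref{lem3.2} plus pseudomonotonicity (A2) gives monotone convergence of the composite quantity $a_k$, from which $\|x^k-y^k\|\to 0$, $\|y^{k+1}-x^k\|\to 0$ and $\|y^{k+1}-y^k\|\to 0$ follow via the golden-ratio identity (\ref{14-1}), so that $\lim_k\|x^k-z\|$ exists, and the cluster-point analysis via the optimality condition, (A7) and (A3) combined with Opial's lemma (Lemma~\ref{opi}) is exactly the paper's Steps 2--3. The only point to tighten is that (A7) should be invoked directly for the subsequences $\{y^{k_j}\}$, $\{y^{k_j+1}\}$ (which is legitimate, since they are bounded with vanishing difference), giving $\limsup_j f(y^{k_j},y^{k_j+1})\ge 0$ along the subsequence; a full-sequence $\limsup$ bound would not by itself transfer to the subsequence, and this is precisely how the paper phrases that step.
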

\begin{proof}
	We split the proof into several steps:
	\vskip 0.1cm\noindent
	{\bf Step 1:} We first show the boundedness of the sequence $\{x^k\}$.
	Let $z\in {\rm Sol}(C,f)$. It follows from the pseudomonotonicity of $f$ that $f(y^k,z)\leq 0$, then the inequality (\ref{19}) of Lemma \ref{lem3.2} implies
	\begin{align}\label{20}
	(1+\varphi)\|x^{k+1}-z\|^2+\frac{\varphi}{2}\|y^k-y^{k+1}\|^2&\leq (1+\varphi)\|x^{k}-z\|^2+\frac{\varphi}{2}\|y^{k-1}-y^k\|^2\notag\\
	&\hskip3.6cm-\varphi\|x^k-y^{k}\|^2.
	\end{align}
	
	From this we infer that the sequence $\big\{(1+\varphi)\|x^{k}-z\|^2+\frac{\varphi}{2}\|y^{k-1}-y^k\|^2\big\}$ is convergent. Therefore, the sequence $\{\|x^k-z\|\}$ is bounded, and so is $\{x^k\}$. 
	Moreover,
		\begin{align}\label{22}
\lim_{k\to\infty}\|x^k-y^{k}\|=0
	\end{align}
	and also by (\ref{14-1})
	\begin{align}\label{23}
\lim_{k\to\infty}\|y^{k+1}-x^{k}\|=0.
\end{align}
	
This together with (\ref{22}) implies that
	\begin{align}\label{24}
	\lim_{k\to\infty}\|y^{k+1}-y^{k}\|=0.
	\end{align}
	\vskip 0.1cm\noindent
	{\bf Step 2:} Let us show that any weakly cluster point of the sequence  $\{x^k\}$ belongs to the solution set ${\rm Sol}(C,f)$.
	
Indeed, let $\bar{x}$ be an arbitrary weakly cluster point of $\{x^k\}$.  Since $\{x^k\}$ is bounded, there exists a subsequence $\{x^{k_l}\}$ of $\{x^k\}$ such that $x^{k_l}\rightharpoonup \bar{x}$. 
From (\ref{22}) we have $y^{k_l}\rightharpoonup \bar{x}\in C$.
	
	It follows from $$y^{k+1}=\underset{y\in C}{\textup{argmin}}\bigg\{\lambda f(y^k,y)+\dfrac{1}{2}\|y-x^{k}\|^2\bigg\}$$ and Lemma \ref{lem1} that there exist $w^{k+1}\in \partial f(y^k,.)(y^{k+1})$ and $q^{k+1}\in N_C(y^{k+1})$ such that
	$$ 0=\lambda w^{k+1}+y^{k+1}-x^{k}+q^{k+1}. $$
	
	From the definition of $N_C(y^{k+1})$, we deduce that
	$$ \langle x^{k}-y^{k+1}-\lambda w^{k+1},y-y^{k+1} \rangle \leq 0\ \ \forall y\in C, $$ or
	$$   \langle x^{k}-y^{k+1},y-y^{k+1} \rangle \leq \langle \lambda w^{k+1},y-y^{k+1} \rangle\ \ \forall y\in C. $$
	
	On the other hand, since $w^{k+1}\in \partial f(y^k,.)(y^{k+1})$, we get
	$$ \langle w^{k+1},y-y^{k+1} \rangle \leq f(y^k,y)-f(y^k,y^{k+1})  \ \forall y \in C. $$
	
	Hence, we arrive at
	\begin{equation}\label{pt3-18}
	\frac {\langle x^{k}-y^{k+1},y-y^{k+1}\rangle}{\lambda}\leq f(y^k,y)-f(y^k,y^{k+1})\ \ \forall y \in C.
	\end{equation}
	
	Since the left-hand side converges to zero, replacing $k$ in (\ref{pt3-18}) by $k_l$ we have by (\ref{23}) and the assumption (A7) that
	$$
	0\le \limsup_{l\to\infty}f(y^{k_l}, y^{k_l+1}) =  \limsup_{l\to\infty}\left(\frac{\langle x^{k_l}-y^{k_l+1},y-y^{k_l+1}\rangle}{\lambda} + f(y^{k_l}, y^{k_l+1})\right)$$
	$$
	\le \limsup_{l\to\infty}f(y^{k_l}, y)\ \ \forall y \in C.
	$$
	
	Now under the condition (A3), we obtain, $\bar x \in {\rm Sol}(C,f)$.
		\vskip 0.1cm\noindent
		{\bf Step 3:} We claim that $x^k\rightharpoonup \bar{x}$. Since $\bar{x}$ is an arbitrary weakly cluster point we can conclude that the set of all weakly cluster points belongs to the solution set ${\rm Sol}(C,f)$. Taking into account the convergence of the sequence $\big\{(1+\varphi)\|x^{k}-z\|^2+\frac{\varphi}{2}\|y^{k-1}-y^k\|^2\big\}$ and (\ref{23}), we deduce that the sequence $\big\{\|x^{k}-z\|\big\}$ is convergent.
		Hence, it follows from Lemma \ref{opi} that the  sequence $\{x^k\}$ weakly converges to a solution of the equilibrium problem (\ref{pt1}). This completes the proof.
\end{proof}
\begin{remark}\upshape
Theorem \ref{th3.1} extends, improves, supplements, and develops the results of \cite{antipin,strodiot1,Lyashko,quoc} in the following aspects:
\begin{enumerate}
\item[(1)] In comparison with \cite{antipin,strodiot1,Lyashko,quoc}, Algorithm \ref{alg1} has the advantage that our method consists of one strongly convex programming problem instead of two or three ones as the methods of \cite{strodiot1,Lyashko,quoc}.
\item[(2)] The continuity imposed on $f$ is relaxed.
\item[\textup{(3)}] The sequence $\{x^k\}$ generated by Algorithm \ref{alg1} is not Fej\'{e}r monotone. Therefore, our proof techniques are different from those in \cite{antipin,strodiot1,quoc}.

\end{enumerate}
\end{remark}
\section{Strong convergence of the golden ratio algorithm}
We will use the strong pseudomonotonicity of the
bifunction $f$ to establish the strong convergence of the gold ratio algorithm.
\subsection{An algorithm without knowledge of Lipschitz-type constants}
In general, the Lipschitz-type condition (\ref{Lip1}) is not
satisfied, and even if $f$ satisfies (\ref{Lip1}) then finding the constants $c_1$ and $c_2$ is not easy. To overcome this drawback, we propose the following algorithm.
\begin{algorithm}[H]
	\caption{(Golden ratio algorithm without knowledge of Lipschitz-type constants)}\label{alg2}
	{\bf Initialization:} Let $\varphi=\frac{\sqrt{5}+1}{2}$. Take a positive sequence $\{\lambda_k\}$ satisfying
	\begin{align}\label{e20}
    &\lim_{k\to\infty}\lambda_k=0,\ \ \sum_{k=0}^{\infty}\lambda_k=+\infty.
	\end{align}
	
	Select initial $x^0\in C$, $y^1\in C$ and set $k:=0$.\vskip 0.1cm\noindent
	{\bf Iterative Step:} Given $x^{k-1}$ and $y^k$ {\upshape(}$k\geq 1${\upshape)}, compute
	\begin{align*}
	&x^k=\dfrac{(\varphi-1)y^k+x^{k-1}}{\varphi},\\
	&y^{k+1}={\rm argmin}\Big\{\lambda_k f(y^k,y)+\dfrac{1}{2}\|y-x^k\|^2: y\in C\Big\}.
	\end{align*}
	\vskip 0.1cm\noindent
	{\bf Stopping Criterion:} If $y^{k+1}=y^k=x^k$ then stop. Otherwise, let $k:=k+1$ and return to {\bf Iterative Step}.
	\end{algorithm}

We now state and prove the following strong convergence result for Algorithm \ref{alg2}.

\begin{theorem}\label{th4.1}
	Under the assumptions (A1), (A4)-(A6), (A8) and (A9), the sequence $\{x^k\}$ generated by Algorithm \ref{alg2} converges strongly to the unique solution of the EP (\ref{pt1}).
\end{theorem}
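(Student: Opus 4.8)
The plan is to run the same Lyapunov-function machinery as in Lemma~\ref{lem3.2} and Theorem~\ref{th3.1}, but now to exploit the extra gain coming from strong pseudomonotonicity (A9) in order to upgrade weak convergence to strong convergence. First I would record that the solution is unique: if $z_1,z_2\in{\rm Sol}(C,f)$, then $f(z_1,z_2)\ge 0$, so (A9) forces $f(z_2,z_1)\le-\gamma\|z_1-z_2\|^2$; since $z_2$ is also a solution we have $f(z_2,z_1)\ge 0$, whence $\|z_1-z_2\|=0$. Thus by (A8) there is a unique solution $x^*$, and it suffices to prove $x^k\to x^*$.

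Next I would derive the variable-step analogue of the key estimate~\eqref{19}. Repeating the subdifferential/optimality argument of Lemma~\ref{lem3.2} with $\lambda_k$ in place of $\lambda$ and with $z=x^*$ yields an inequality of the shape
\begin{align*}
a_{k+1}\le a_k-\varphi\|x^k-y^k\|^2+r_k+2\lambda_k f(y^k,x^*),
\end{align*}
where $a_k:=(1+\varphi)\|x^k-x^*\|^2+\tfrac{\varphi}{2}\|y^{k-1}-y^k\|^2$ is the same Lyapunov functional and $r_k$ collects the Lipschitz-type corrections produced by (A5). Because $\lambda_k\to 0$, for $k$ large enough one has $2\lambda_k c_1,\,2\lambda_k c_2\le \varphi/2$, so these corrections are again absorbed into the telescoping $\tfrac{\varphi}{2}$-terms exactly as in~\eqref{17}; this shows that $\{a_k\}$ is eventually non-increasing (since $f(y^k,x^*)\le0$ by pseudomonotonicity), and hence that $\{x^k\}$ and $\{y^k\}$ are bounded and that $\|x^k-y^k\|\to 0$ as in Step~1 of Theorem~\ref{th3.1}.

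Then I would inject strong pseudomonotonicity. Since $x^*\in{\rm Sol}(C,f)$ we have $f(x^*,y^k)\ge 0$, and (A9) gives $f(y^k,x^*)\le-\gamma\|y^k-x^*\|^2$. Substituting into the previous display produces
\begin{align*}
a_{k+1}\le a_k-2\gamma\lambda_k\|y^k-x^*\|^2+\rho_k,
\end{align*}
where $\rho_k$ is a controllable remainder. Using $\|x^k-y^k\|\to 0$ together with relation~\eqref{e13} to compare $\|y^k-x^*\|^2$ with $a_k$, I would recast this as a recursion
\begin{align*}
a_{k+1}\le(1-b_k)a_k+b_k\varepsilon_k,
\end{align*}
with $b_k=\Theta(\gamma\lambda_k)\in(0,1)$, so that $\sum_k b_k=+\infty$ by~\eqref{e20}, and $\varepsilon_k\to 0$. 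A standard recursive lemma of Xu/Maing\'e type then gives $a_k\to 0$, hence $\|x^k-x^*\|\to 0$, which is the asserted strong convergence.

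The hard part will be the passage to the contraction $a_{k+1}\le(1-b_k)a_k+b_k\varepsilon_k$, and two issues must be handled with care. First, $y^k$ and $y^{k+1}$ are generated with the \emph{different} step sizes $\lambda_{k-1}$ and $\lambda_k$, so the Lipschitz terms from (A5) no longer cancel as cleanly as in the fixed-step Lemma~\ref{lem3.2}; controlling the residual, essentially a term of size $(\lambda_{k-1}-\lambda_k)f(y^k,y^{k+1})$, requires the boundedness of the iterates and of $f$ along them. Second, the functional $a_k$ carries the auxiliary term $\tfrac{\varphi}{2}\|y^{k-1}-y^k\|^2$, which must be shown to vanish fast enough, or be folded into $\varepsilon_k$, so that the genuine decrease $-2\gamma\lambda_k\|y^k-x^*\|^2$ dominates the whole of $a_k$ and not merely its $\|x^k-x^*\|^2$ part. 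Overcoming these two bookkeeping obstacles is precisely what converts the diminishing, non-summable step-size condition~\eqref{e20} into the quantitative contraction demanded by the recursive lemma.
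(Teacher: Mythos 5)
Your proposal is correct in substance and, for its first two thirds, coincides with the paper's own proof: uniqueness of the solution via (A9), the variable-step analogue of Lemma \ref{lem3.2} written at the solution $x^*$, absorption of the Lipschitz-type corrections for large $k$ (possible since $\lambda_k\to 0$ gives $\tfrac{\varphi}{2}-2\lambda_k c_i>\tfrac{\varphi}{4}$ eventually), and the strong-pseudomonotonicity gain $2\lambda_k f(y^k,x^*)\le -2\gamma\lambda_k\|y^k-x^*\|^2$, which yields eventual monotonicity (hence convergence) of $\sigma_k=(1+\varphi)\|x^k-x^*\|^2+\tfrac{\varphi}{2}\|y^{k-1}-y^k\|^2$ together with $\|x^k-y^k\|\to 0$ and $\|y^k-y^{k+1}\|\to 0$. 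Where you genuinely diverge is the endgame: the paper never passes to a quasi-contraction. It simply telescopes
\begin{align*}
2\gamma\lambda_k\|y^k-x^*\|^2\le \sigma_k-\sigma_{k+1}\qquad (k\ge k_0)
\end{align*}
to get $\sum_k\lambda_k\|y^k-x^*\|^2<+\infty$, which combined with $\sum_k\lambda_k=+\infty$ from (\ref{e20}) forces $\liminf_{k\to\infty}\|y^k-x^*\|=0$, hence $\liminf_{k\to\infty}\|x^k-x^*\|=0$; since $\lim_{k\to\infty}\|x^k-x^*\|$ already exists (convergence of $\sigma_k$ plus vanishing of its auxiliary term), this liminf is the limit, and the proof ends there. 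This shortcut dissolves the second of your two ``bookkeeping obstacles'': one never needs the decrease term to dominate the whole of $a_k$, because a liminf suffices once the limit is known to exist, and the term $\tfrac{\varphi}{2}\|y^{k-1}-y^k\|^2$ needs no separate treatment since it is already known to vanish. Your Xu/Maing\'e route can indeed be completed (e.g.\ using $\|y^k-x^*\|^2\ge\tfrac12\|x^k-x^*\|^2-\|x^k-y^k\|^2$ and folding the vanishing quantities $\|x^k-y^k\|^2$, $\|y^{k-1}-y^k\|^2$ into $\varepsilon_k$), but it is strictly heavier machinery than the problem requires. Finally, your first obstacle --- that $y^k$ is produced with step $\lambda_{k-1}$ while $y^{k+1}$ uses $\lambda_k$, so the (A5) terms do not recombine exactly and leave a residual of size $(\lambda_{k-1}-\lambda_k)f(y^k,y^{k+1})$ --- is a legitimate observation; note, however, that the paper's own proof silently ignores it, writing a single factor $2\lambda_k$ in front of all three mixed bifunction terms in its inequality (\ref{pt21}), so on this point your write-up is actually more careful than the original.
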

\begin{proof}
Arguing as the proof of the Lemma \ref{lem3.2} we have
\begin{align}\label{pt21}
	\|y^{k+1}-z\|^2&\leq \|x^{k}-z\|^2-\|x^k-y^{k+1}\|^2-\varphi\big[\|x^k-y^{k}\|^2+\|y^{k+1}-y^k\|^2-\|y^{k+1}-x^k\|^2\big]\notag\\
	&\hskip 1.3cm+2\lambda_k \big[f(y^{k-1},y^{k+1})-f(y^{k-1},y^{k})-f(y^k,y^{k+1})\big]+2\lambda_k f(y^k,z).
	\end{align}
	
	By the assumption (A5), we get
	\begin{align}\label{222}
	2\lambda_k[f(y^{k-1},y^{k+1})-f(y^{k-1},y^{k})-f(y^k,y^{k+1})] 
	&\leq 2\lambda_k[c_1\|y^{k-1}-y^k\|^2+c_2\|y^k-y^{k+1}\|^2].
	\end{align}

	It follows from (\ref{e13}), (\ref{pt21}) and (\ref{222}) that
	\begin{align*}
	(1+\varphi)\|x^{k+1}-z\|^2&\leq (1+\varphi)\|x^{k}-z\|^2-\varphi\big[\|x^k-y^{k}\|^2+\|y^{k+1}-y^k\|^2\big]\notag\\
	&\hskip0.9cm+2\lambda_k[c_1\|y^{k-1}-y^k\|^2+c_2\|y^k-y^{k+1}\|^2]+2\lambda_k f(y^k,z).
	\end{align*}
	
	Consequently,
	\begin{align}\label{pt23}
	(1+\varphi)\|x^{k+1}-z\|^2&+\frac{\varphi}{2}\|y^k-y^{k+1}\|^2\leq (1+\varphi)\|x^{k}-z\|^2+\frac{\varphi}{2}\|y^{k-1}-y^k\|^2\notag\\
	&-\Big(\frac{\varphi}{2}-2\lambda_kc_1\Big)\|y^{k-1}-y^k\|^2-\Big(\frac{\varphi}{2}-2\lambda_kc_1\Big)\|y^k-y^{k+1}\|^2\notag\\
	&\hskip5.1cm-\varphi\|x^k-y^{k}\|^2-2\gamma\lambda_k\|y^{k}-z\|^2,
	\end{align}
	where the last inequality is obtained from the strong pseudomonotonicity of $f$.
	
	Since $\lim_{k\to\infty}\lambda_k=0$, there exists $k_0$ such that $\frac{\varphi}{2}-2\lambda_kc_1>\frac{\varphi}{4}$ and $\frac{\varphi}{2}-2\lambda_kc_2>\frac{\varphi}{4}$ for all $k\geq k_0$. This together with (\ref{pt23}) implies that the sequence $\big\{(1+\varphi)\|x^{k}-z\|^2+\frac{\varphi}{2}\|y^{k-1}-y^k\|^2\big\}$ is convergent and
		\begin{align}\label{pt24}
&	\lim_{k\to\infty}\|y^k-y^{k+1}\|=\lim_{k\to\infty}\|x^k-y^{k}\|=0.
	\end{align}

Therefore 				
\begin{align}\label{e25} 
\lim_{k\to\infty}\|x^k-z\|^2\in\mathbb{R}.
\end{align}

	On the other hand, we have from (\ref{pt23}) that
	\begin{align}\label{pt25}
	2\gamma\lambda_k\|y^{k}-z\|^2\leq \sigma_k-\sigma_{k+1}\ \forall k\geq k_0,  	
	\end{align}
	where $$\sigma_k=(1+\varphi)\|x^{k}-z\|^2+\frac{\varphi}{2}\|y^{k-1}-y^k\|^2.$$
	
	We fix a number $N\in \mathbb{N}$ and consider the inequality (\ref{pt25}) for
	all the numbers $k_0,...,N$. Adding these inequalities, we obtain
	\begin{align}\label{pt25}
	2\gamma\sum_{k=k_0}^N\lambda_k\|y^{k}-z\|^2\leq \sigma_{k_0}-\sigma_{N+1}\leq\sigma_{k_0}, 	
	\end{align}
which implies
$$\sum\limits_{k=0}^{\infty}\lambda_k\|y^{k}-z\|^2<+\infty.$$

 Hence, by (\ref{e20}), we have
	\begin{align}\label{e27} 
	\liminf_{k\to\infty}\|y^{k}-z\|=0.
	\end{align}
	
Combining (\ref{pt24}) and (\ref{e27}) we get
	\begin{align}\label{e28} 
\liminf_{k\to\infty}\|x^{k}-z\|=0.
\end{align}

Finally, by (\ref{e25}) and (\ref{e28}) we conclude that $\lim_{k\to\infty}\|x^{k}-z\|=0$. The proof is complete.
\end{proof}

\subsection{An algorithm without Lipschitz-type condition}
To avoid the Lipschitz-type condition (\ref{Lip1}), we introduce the following self-adaptive algorithm.
\begin{algorithm}[H]
	\caption{(Golden ratio algorithm without Lipschitz-type condition)}\label{alg3}
	{\bf Initialization:} Let $\varphi=\frac{\sqrt{5}+1}{2}$. Take a positive sequence $\{\beta_k\}$ satisfying
	\begin{align}\label{pt30}
\sum_{k=0}^{\infty}\beta_k=+\infty,\ \ \sum_{k=0}^{\infty}\beta^2_k<+\infty.
	\end{align}
	
	Select initial $x^0\in C$, $y^1\in C$ and set $k:=0$.\vskip 0.1cm\noindent
	{\bf Iterative Step:} Given $x^{k-1}$ and $y^k$ {\upshape(}$k\geq 1${\upshape)}, compute
	\begin{align}
	&x^k=\dfrac{(\varphi-1)y^k+x^{k-1}}{\varphi}.
	\end{align}
	
	Take $g(y^{k})\in \partial(f(y^k,.))(y^k)$ ($k\geq 1$). Calculate
	\begin{align}\label{e35}
	\eta_k=\max\{1,\|g(y^k)\|\},\ \ \lambda_k=\dfrac{\beta_k}{\eta_k}
	\end{align}
	and
	\begin{align}
	y^{k+1}&=P_C(x^k-\lambda_k g(y^k)).\label{e36}
	\end{align}
	
	\vskip 0.1cm\noindent
	{\bf Stopping Criterion:} If $y^{k+1}=y^k=x^k$ then stop. Otherwise, let $k:=k+1$ and return to {\bf Iterative Step}.
	
\end{algorithm}

The following lemma is quite helpful to analyze the convergence of Algorithm \ref{alg1}.
\begin{lemma}\label{bode3.0}
	If $y^{k+1}=y^k=x^k$ then $y^k\in {\rm Sol}(C,f)$.
\end{lemma}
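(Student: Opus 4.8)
The plan is to exploit the projection structure of Algorithm~\ref{alg3}, exactly as the stopping lemma for Algorithm~\ref{alg1} exploited the optimality condition of the proximal subproblem. The starting point is the hypothesis $y^{k+1}=y^k=x^k$ together with the defining relation \eqref{e36}, namely $y^{k+1}=P_C\big(x^k-\lambda_k g(y^k)\big)$, where $g(y^k)\in\partial(f(y^k,\cdot))(y^k)$ and $\lambda_k=\beta_k/\eta_k>0$. Substituting $y^{k+1}=x^k$ yields the fixed-point identity $x^k=P_C\big(x^k-\lambda_k g(y^k)\big)$.

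First I would apply part~(1) of Lemma~\ref{hchieu} to the point $x^k-\lambda_k g(y^k)\in H$, whose metric projection onto $C$ is precisely $x^k$. This gives
$$\big\langle (x^k-\lambda_k g(y^k))-x^k,\,y-x^k\big\rangle\le 0\qquad\forall y\in C,$$
which simplifies to $-\lambda_k\langle g(y^k),y-x^k\rangle\le 0$. Since $\lambda_k>0$, dividing through (without flipping the inequality) produces the variational inequality
$$\langle g(y^k),y-x^k\rangle\ge 0\qquad\forall y\in C.$$

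Next, using the hypothesis $x^k=y^k$ I would rewrite this as $\langle g(y^k),y-y^k\rangle\ge 0$ for all $y\in C$. Because $g(y^k)$ is a subgradient of $f(y^k,\cdot)$ at $y^k$, the subdifferential inequality gives $f(y^k,y)-f(y^k,y^k)\ge\langle g(y^k),y-y^k\rangle$ for all $y\in H$. Invoking assumption~(A1), so that $f(y^k,y^k)=0$, and combining with the variational inequality just derived, I would conclude that $f(y^k,y)\ge 0$ for every $y\in C$, which is precisely the assertion $y^k\in{\rm Sol}(C,f)$.

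This argument is entirely routine and presents no genuine obstacle. The only point requiring slight care is the correct orientation of the projection inequality in Lemma~\ref{hchieu}(1)---it must be applied to the shifted point $x^k-\lambda_k g(y^k)$ rather than to $x^k$ itself---and the use of $\lambda_k>0$ to divide the resulting inequality without reversing its sign.
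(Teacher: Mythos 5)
Your proof is correct and follows essentially the same route as the paper's own argument: both substitute the stopping condition into the projection step \eqref{e36}, apply Lemma~\ref{hchieu}(1) to the shifted point $x^k-\lambda_k g(y^k)$ to obtain $\langle g(y^k),y-y^k\rangle\ge 0$ for all $y\in C$, and then combine the subdifferential inequality with assumption (A1) to conclude $f(y^k,y)\ge 0$ on $C$. The only cosmetic difference is that the paper writes $y^k$ in place of $x^k$ from the outset, whereas you carry $x^k$ along and invoke $x^k=y^k$ one step later.
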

\begin{proof}
	If $y^{k+1}=y^k=x^k$ then by (\ref{e36}) and Lemma \ref{hchieu} (1), we have 
	\begin{align*}
	\langle y^k-\lambda_k g(y^k)-y^k,y-y^k\rangle&\leq 0\ \ \forall y\in C,
	\end{align*}
	or equivalently,
	\begin{align}
	\langle g(y^k),y-y^k\rangle&\geq 0\ \ \forall y\in C.\label{8}
	\end{align}
	
	Therefore, from (\ref{8}) and by the definition of $\partial(f(y^k,.))(y^k)$, we get
	$$ f(y^k,y)=f(y^k,y)-f(y^k,y^k)\geq \langle g(y^k),y-y^k\rangle\geq 0\ \ \forall y\in C. $$
	
	Hence $y^k\in {\rm Sol}(C,f)$.
\end{proof}

To establish the strong convergence of Algorithm \ref{alg3}, we will use the following requirement:\vskip 0.2cm
\noindent
(A10) If $\{x^k\}\subset C$ is bounded, then the sequence $\{g^k\}$ with $g^k\in \partial(f(x^k,.))(x^k)$ is bounded.
\vskip 0.2cm
\noindent

We are now in a position to establish the strong convergence of the sequence generated
by Algorithm \ref{alg3}.
\begin{theorem}\label{th4.2}
	Under the assumptions (A1), (A4), (A8), (A9) and (A10), the sequence $\{x^k\}$ generated by Algorithm \ref{alg3} converges strongly to the unique solution of the EP (\ref{pt1}).
\end{theorem}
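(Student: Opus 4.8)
The plan is to mimic the energy-descent strategy of Theorem \ref{th4.1}, but to replace the Lipschitz-type bookkeeping (which is now unavailable) by the subgradient step $y^{k+1}=P_C(x^k-\lambda_k g(y^k))$ together with the normalisation $\lambda_k\eta_k=\beta_k$. First I would record two elementary facts that need no extra hypotheses. Since $x^k$ is a convex combination of $y^k$ and $x^{k-1}$, an induction shows $x^k,y^k\in C$, so $P_Cx^k=x^k$; nonexpansiveness of $P_C$ then gives $\|y^{k+1}-x^k\|=\|P_C(x^k-\lambda_k g(y^k))-P_Cx^k\|\le\lambda_k\|g(y^k)\|\le\beta_k$, because $\lambda_k\|g(y^k)\|=\beta_k\|g(y^k)\|/\eta_k\le\beta_k$ by \eqref{e35}. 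Likewise $x^k-y^k=\tfrac1\varphi(x^{k-1}-y^k)$, whence $\|x^k-y^k\|\le\beta_{k-1}/\varphi$, so $\|y^k-y^{k+1}\|\to0$ and both differences are dominated by the $\beta$'s.

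Next I would derive the basic descent estimate. Uniqueness of the solution is immediate from (A9): if $z_1,z_2\in{\rm Sol}(C,f)$ then $f(z_1,z_2)\ge0$ forces $f(z_2,z_1)\le-\gamma\|z_1-z_2\|^2$, while $f(z_2,z_1)\ge0$, so $z_1=z_2$; call the solution $z$. Applying Lemma \ref{hchieu}(1) to $y^{k+1}=P_C(x^k-\lambda_k g(y^k))$ with test point $z$ and expanding via the polarization identity yields $\|y^{k+1}-z\|^2\le\|x^k-z\|^2-\|x^k-y^{k+1}\|^2+2\lambda_k\langle g(y^k),z-y^{k+1}\rangle$. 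I would split $\langle g(y^k),z-y^{k+1}\rangle=\langle g(y^k),z-y^k\rangle+\langle g(y^k),y^k-y^{k+1}\rangle$. The subgradient inequality for $g(y^k)\in\partial f(y^k,\cdot)(y^k)$ and (A1) give $\langle g(y^k),z-y^k\rangle\le f(y^k,z)$, and since $z$ solves the EP, strong pseudomonotonicity yields $f(y^k,z)\le-\gamma\|y^k-z\|^2$; the remaining term is bounded by $\|g(y^k)\|\,\|y^k-y^{k+1}\|$, so that $2\lambda_k\langle g(y^k),y^k-y^{k+1}\rangle\le2\beta_k\|y^k-y^{k+1}\|$. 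Substituting the golden-ratio identity \eqref{e13} to trade $\|y^{k+1}-z\|^2$ for $(1+\varphi)\|x^{k+1}-z\|^2$ (the spare negative terms $-(1+\tfrac1\varphi)\|y^{k+1}-x^k\|^2$ are simply discarded) collapses everything to $(1+\varphi)\|x^{k+1}-z\|^2\le(1+\varphi)\|x^k-z\|^2-2\gamma\lambda_k\|y^k-z\|^2+b_k$, where $b_k:=2\beta_k\|y^k-y^{k+1}\|\le\tfrac{2}{\varphi}\beta_k\beta_{k-1}+2\beta_k^2$ is summable by \eqref{pt30} together with $2\beta_k\beta_{k-1}\le\beta_k^2+\beta_{k-1}^2$.

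Finally I would extract strong convergence. Dropping the nonpositive middle term gives $(1+\varphi)\|x^{k+1}-z\|^2\le(1+\varphi)\|x^k-z\|^2+b_k$, so Lemma \ref{tan-xu} shows that $\lim_k\|x^k-z\|$ exists; in particular $\{x^k\}$, and hence (via $\|x^k-y^k\|\to0$) $\{y^k\}$, is bounded. Summing the full inequality over $k$ also yields $\sum_k\lambda_k\|y^k-z\|^2<\infty$. Here is where (A10) enters: boundedness of $\{y^k\}$ makes $\{g(y^k)\}$ bounded, so $\eta_k\le M$ for some $M\ge1$ and thus $\lambda_k\ge\beta_k/M$; therefore $\sum_k\beta_k\|y^k-z\|^2<\infty$, and since $\sum_k\beta_k=\infty$ this forces $\liminf_k\|y^k-z\|=0$, hence $\liminf_k\|x^k-z\|=0$. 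Combined with the existence of $\lim_k\|x^k-z\|$, we conclude $\lim_k\|x^k-z\|=0$, i.e. $x^k\to z$ strongly. The main obstacle is the cross term $2\lambda_k\langle g(y^k),y^k-y^{k+1}\rangle$: controlling it is precisely what forces the two-sided use of the step-size normalisation, namely $\lambda_k\|g(y^k)\|\le\beta_k$ to make the perturbation $b_k$ summable, and $\lambda_k\ge\beta_k/M$ (through (A10)) to keep $\sum_k\lambda_k\|y^k-z\|^2$ divergent enough to annihilate $\liminf_k\|y^k-z\|$.
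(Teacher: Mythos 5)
Your proof is correct, and while its overall skeleton (a quasi-Fej\'er descent inequality, Lemma \ref{tan-xu}, then $\sum_k\lambda_k\|y^k-z\|^2<\infty$ combined with (A9)/(A10) to force $\liminf_k\|y^k-z\|=0$) coincides with the paper's, the key technical step is handled genuinely differently. The paper controls the cross term $2\lambda_k\langle g(y^k),y^k-y^{k+1}\rangle$ by a Popov-type comparison with the \emph{previous} projection step: it tests the variational characterization of $y^{k}=P_C(x^{k-1}-\lambda_{k-1}g(y^{k-1}))$ at the point $y^{k+1}$, which produces the difference $\lambda_kg(y^k)-\lambda_{k-1}g(y^{k-1})$ (bounded via the normalization by $(\beta_k+\beta_{k-1})\|y^{k+1}-y^k\|$) plus the golden-ratio inner product $\varphi\langle y^k-x^k,y^{k+1}-y^k\rangle$, and then absorbs the leftover $\tfrac12\|y^{k+1}-y^k\|^2$ into the negative terms $-\varphi\big[\|x^k-y^k\|^2+\|y^{k+1}-y^k\|^2\big]$ coming from the identity (\ref{e13}); these retained negative terms are also what give the paper $\|x^k-y^k\|\to 0$. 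You instead observe that $x^k\in C$ by induction (convexity of $C$), so nonexpansiveness of $P_C$ together with $\lambda_k\|g(y^k)\|\le\beta_k$ yields the a priori bounds $\|y^{k+1}-x^k\|\le\beta_k$ and $\|x^k-y^k\|\le\beta_{k-1}/\varphi$; the cross term is then at most $2\beta_k\|y^k-y^{k+1}\|\le\tfrac{2}{\varphi}\beta_k\beta_{k-1}+2\beta_k^2$, summable by (\ref{pt30}), so the golden-ratio negative terms can simply be discarded and no memory of the previous step is needed. Your route is shorter, needs only the per-iteration inequality, and gets $\|x^k-y^k\|\to 0$ for free; the price is that it exploits the specific projection/normalization structure of Algorithm \ref{alg3} and would not transfer to the prox-based Algorithms \ref{alg1}--\ref{alg2}, where no such a priori displacement bound is available, whereas the paper's Popov-style bookkeeping is the same mechanism that drives its other convergence proofs.
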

\begin{proof}
	Write $w^k=x^k-\lambda_kg(y^k)$. Then using Lemma \ref{hchieu2} (2) we know that
	\begin{align}\label{34}
	\|y^{k+1}-z\|^2&\leq\|w^k-z\|^2-\|w^k-y^{k+1}\|^2\notag\\
	&=\|x^k-\lambda_k g(y^k)-z\|^2-\|x^k-\lambda_k g(y^k)-y^{k+1}\|^2\notag\\
	&=\|x^k-z\|^2-\|x^k-y^{k+1}\|^2+2\lambda_k\langle z-y^{k+1}, g(y^k)\rangle\notag\\
	&=\|x^k-z\|^2-\|x^k-y^{k+1}\|^2+2\lambda_k\langle g(y^k), y^k-y^{k+1}\rangle-2\lambda_k\langle g(y^k), y^k-z\rangle.
	\end{align}
	
	From (\ref{e36}) and $y^{k+1}\in C$ we have
	\begin{align}\label{35}
	\langle y^{k}-x^{k-1}+\lambda_{k-1}g(y^{k-1}),y^{k+1}-y^{k} \rangle&\geq 0\notag\\
	\Longleftrightarrow 
		\langle \varphi(y^{k}-x^{k})+\lambda_{k-1}g(y^{k-1}),y^{k+1}-y^{k} \rangle&\geq 0.	
	\end{align}
	
		It follows from (\ref{34}) and (\ref{35}) that
\begin{align}\label{36}
\|y^{k+1}-z\|^2
&\leq\|x^k-z\|^2-\|x^k-y^{k+1}\|^2+\langle \varphi(y^{k}-x^{k}),y^{k+1}-y^{k} \rangle\notag\\
&+2\langle \lambda_kg(y^k)-\lambda_{k-1}g(y^{k-1}), y^k-y^{k+1}\rangle-2\lambda_k\langle g(y^k), y^k-z\rangle.
	\end{align}
	
	Moreover,
	\begin{align*}
	\langle \varphi(y^{k}-x^{k}),y^{k+1}-y^{k} \rangle
	&=\varphi\big[\|y^{k+1}-x^k\|^2-\|x^k-y^{k}\|^2-\|y^{k+1}-y^k\|^2\big].
	\end{align*}

This equality, together with (\ref{e13}) and (\ref{36}), yields
\begin{align}\label{38}
(1+\varphi)\|x^{k+1}-z\|^2
&\leq(1+\varphi)\|x^k-z\|^2-\varphi\big[\|x^k-y^{k}\|^2+\|y^{k+1}-y^k\|^2\big]\notag\\
&\hskip1cm+2\langle \lambda_kg(y^k)-\lambda_{k-1}g(y^{k-1}), y^k-y^{k+1}\rangle-2\lambda_k\langle g(y^k), y^k-z\rangle.
\end{align}

Using the definition of the diagonal subdifferential and the fact that $f$ is strongly pseudomonotone on $C$ we have
	\begin{align}\label{39}
	\langle g(y^k), z-y^k\rangle\leq f(y^k,z)\leq -\gamma\|y^k-z\|^2.
\end{align}
	Setting $LS:=\langle \lambda_kg(y^k)-\lambda_{k-1}g(y^{k-1}), y^k-y^{k+1}\rangle$ we find that
	\begin{align}\label{40}
	LS&
	\leq \bigg\|\dfrac{\beta_k}{\eta_k}g(y^k)-\dfrac{\beta_{k-1}}{\eta_{k-1}}g(y^{k-1})\bigg\|\|y^k-y^{k+1}\|\notag\\
	&\leq (\beta_k+\beta_{k-1})\|y^{k+1}-y^k\|\notag\\
	&\leq \dfrac{1}{2}\Big((\beta_k+\beta_{k-1})^2+\|y^{k+1}-y^k\|^2\Big)\notag\\
	&\leq\beta_k^2+\beta_{k-1}^2+\dfrac{1}{2}\|y^{k+1}-y^k\|^2.
	\end{align}
	
In virtue of (\ref{38}), (\ref{38}) and (\ref{40}) we obtain
\begin{align}\label{41}
(1+\varphi)\|x^{k+1}-z\|^2
&\leq(1+\varphi)\|x^k-z\|^2-\varphi\|x^k-y^{k}\|^2-(\varphi-1)\|y^{k+1}-y^k\|^2\big]\notag\\
&\hskip6.3cm-\gamma\lambda_k\|y^k-z\|^2+2\beta_k^2+2\beta_{k-1}^2.
\end{align}

This yields
\begin{align}\label{38}
a_{k+1}
&\leq a_k+b_k,
\end{align}
where $a_k=(1+\varphi)\|x^k-z\|^2$, $b_k=2\beta_k^2+2\beta_{k-1}^2$. 

The use of Lemma \ref{tan-xu} leads to the convergence of the sequence $\{\|x^k-z\|^2\}$, hence $\{x^k\}$ is bounded. From (\ref{pt30}) and (\ref{41}) we get immediately
		\begin{align}
&	\lim_{k\to\infty}\|y^k-y^{k+1}\|=\lim_{k\to\infty}\|x^k-y^{k}\|=0.
\end{align}

Therefore, $\{y^k\}$ is also bounded. Using 
(A10) we infer that there exists $M_1>0$ such that $\|g(y^k)\|\leq M_1$ for all $k\in\mathbb{N}$. Setting $M_2:=\max\{1, M_1\}$ we have from (\ref{e35}) that
$$\beta_k \geq\lambda_k=\dfrac{\beta_k}{\eta_k}\geq \dfrac{1}{M_2}\beta_k\ \ \forall k\in\mathbb{N}, $$
which together with (\ref{pt30}) yields
\begin{align}\label{pt3.4}
\sum_{k=0}^{\infty}\lambda_k=+\infty; \quad \lim_{k \to \infty} \lambda_k = 0.
\end{align}

The rest of the proof is similar to the proof of Theorem \ref{th4.1}, so we omit the details here.

The proof is complete.
\end{proof}
\section{Application to variational inequalities}
If the equilibrium bifunction $f$ is defined by $f(x,y)=\langle Ax,y-x\rangle$ for every $x,y\in C$, with $A:C\to H$, then the equilibrium problem (\ref{pt1}) reduces to the {\it variational
	inequality problem} (VIP):
\begin{equation}\label{vip}
\text{find }x^* \in C \text{ such that } \langle Ax^*,y-x^*\rangle\geq 0\ \ \forall y \in C.
\end{equation}

The set of solutions of the problem (\ref{vip}) is denoted by ${\rm Sol}(C,A)$.
In this situation, Algorithm \ref{alg1} reduces to the golden ratio algorithm for variational inequalities, which is recently considered by Malitsky \cite{malitsky}.
\begin{algorithm}[H]
	\caption{(Golden ratio algorithm for variational inequalities)}\label{alg4}
	{\bf Initialization:} Let $\varphi=\frac{\sqrt{5}+1}{2}$ and $\lambda>0$.
	
	Select initial $x^0\in C$, $y^1\in C$ and set $k:=0$.\vskip 0.1cm\noindent
	{\bf Iterative Step:} Given $x^{k-1}$ and $y^k$ {\upshape(}$k\geq 1${\upshape)}, compute
	\begin{align}
	&x^k=\dfrac{(\varphi-1)y^k+x^{k-1}}{\varphi}
	\end{align}
	and
	\begin{align}
	y^{k+1}&=P_C(x^k-\lambda Ay^k).
	\end{align}
	
	\vskip 0.1cm\noindent
	{\bf Stopping Criterion:} If $y^{k+1}=y^k=x^k$ then stop. Otherwise, let $k:=k+1$ and return to {\bf Iterative Step}.
	
\end{algorithm}
\begin{remark}\upshape
	Algorithm \ref{alg4} requires, at each iteration, only one projection onto the feasible set $C$.
\end{remark}


We now remind the following concept for single-valued operators (called $F$-hemicontinuity in \cite{Mau}).
\begin{definition}\label{Fhemi}
	Let $X$ be a normed space with $X^*$ its dual space and $K$ a closed convex subset of $X$. The mapping $A: K\to X^*$ is called $F$-{\it hemicontinuous} iff for all
	$y\in K,$ the function $x\mapsto\langle A(x), x - y \rangle$ is
	weakly lower semicontinuous on $K$ (or equivalently, $x\mapsto\langle A(x), y - x \rangle$ is weakly upper semicontinuous on $K$).
\end{definition}
Clearly, any weak-to-strong continuous mapping is also $F$-hemicontinuous, but vice-versa not, as the following example shows.

\begin{example} {\upshape(\cite{gm})} \upshape Consider the Hilbert space $\ell^{2}=\{x=(x^{i})_{i\in
		\mathbf{N}}$ :$\underset{i=1}{\overset{\infty }{\sum }}|x^{i}|^{2}<\infty \}$ and $A$: $\ell^{2}\rightarrow \ell^{2}$ be the
	identity operator. Take an arbitrary sequence $\{x_{n}\}\subseteq
	\ell^{2}$ converging weakly to $\overline{x}.$ Since the function
	$x\longmapsto \|x\|^2$ is continuous and convex, it is
	weakly lower
	semicontinuous. Hence,%
	\begin{equation*}
	\Vert \overline{x}\Vert ^{2}\leq \underset{n\rightarrow \infty }{\lim \inf }%
	\text{ }\Vert x_{n}\Vert ^{2},
	\end{equation*}%
	which clearly implies%
	\begin{equation*}
	\langle \overline{x},\overline{x}-y\rangle \leq \underset{n\rightarrow \infty }{\lim \inf }\text{ }\langle x_{n},x_{n}-y\rangle ,
	\end{equation*}%
	for all $y\in \ell^{2},$ i.e., $A$ is $F$-hemicontinuous.
	
\end{example}

The following result is an extension of the corresponding result of Malitsky to infinite dimensional spaces.
\begin{corollary}\label{corplus}
	Let $C$ be a nonempty closed convex subset of $H$. Let $A: C\to H$ be a pseudomonotone, $F$-hemicontinuous, Lipschitz continuous mapping with constant $L>0$ such that ${\rm Sol}(C,A)\not=\emptyset$.
	Let $\{x^k\}$, $\{y^k\}$ be the sequences generated by Algorithm \ref{alg4} with $\lambda\in\big(0,\frac{\varphi}{2L}\big]$. Then the sequences $\{x^k\}$ and $\{y^k\}$ converge weakly to the same point $x^*\in {\rm Sol}(C,A)$.
\end{corollary}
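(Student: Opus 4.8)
The plan is to recognize Algorithm \ref{alg4} as the exact specialization of Algorithm \ref{alg1} to the bifunction $f(x,y)=\langle Ax,y-x\rangle$, and then to invoke the weak convergence Theorem \ref{th3.1} after checking that the hypotheses of the corollary force the assumptions (A1)--(A8). First I would verify the reduction of the iterative step: for this $f$ the objective $\lambda f(y^k,y)+\frac12\|y-x^k\|^2$ equals, up to an additive constant in $y$, the quantity $\frac12\|y-(x^k-\lambda Ay^k)\|^2$, so its minimizer over $C$ is precisely $P_C(x^k-\lambda Ay^k)$, which is the update in Algorithm \ref{alg4}. Moreover the admissible range $\lambda\in(0,\frac{\varphi}{2L}]$ is exactly $(0,\min\{\frac{\varphi}{4c_1},\frac{\varphi}{4c_2}\}]$ once one substitutes the Lipschitz-type constants $c_1=c_2=\frac{L}{2}$ furnished by the Remark in Section \ref{sec:3} relating the Lipschitz continuity of $A$ to the Lipschitz-type condition (\ref{Lip1}).

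Next I would check the eight assumptions one at a time, most of which are immediate. Here (A1) holds since $f(x,x)=0$; (A2) is the pseudomonotonicity of $A$ transported to $f$ through the Remark following Definition \ref{mono2}; (A4) and (A6) hold because $f(x,\cdot)$ is affine, hence convex, continuous and subdifferentiable; (A5) is the Lipschitz-type inequality with $c_1=c_2=\frac{L}{2}$; (A7) follows because for bounded $\{x^k\},\{y^k\}$ with $\|x^k-y^k\|\to0$ the Lipschitz continuity of $A$ keeps $\{Ax^k\}$ bounded, whence $|f(x^k,y^k)|\le\|Ax^k\|\,\|y^k-x^k\|\to0$; and (A8) is the standing hypothesis ${\rm Sol}(C,A)\neq\emptyset$ together with the identity ${\rm Sol}(C,f)={\rm Sol}(C,A)$.

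The one substantive point, and the step I expect to be the crux, is (A3), which is exactly where $F$-hemicontinuity enters. Given $z^k\rightharpoonup z$ with $\limsup_k f(z^k,y)\ge0$ for every $y\in C$, Definition \ref{Fhemi} provides the weak upper semicontinuity of $x\mapsto\langle Ax,y-x\rangle$, so that $\langle Az,y-z\rangle\ge\limsup_k\langle Az^k,y-z^k\rangle\ge0$ for all $y\in C$. This says $f(z,y)\ge0$ for all $y\in C$, i.e. $z\in{\rm Sol}(C,f)$, establishing (A3). With (A1)--(A8) in hand, Theorem \ref{th3.1} then yields a point $x^*\in{\rm Sol}(C,f)={\rm Sol}(C,A)$ with $x^k\rightharpoonup x^*$.

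Finally I would recover the weak convergence of $\{y^k\}$ from that of $\{x^k\}$. The proof of Theorem \ref{th3.1} already establishes $\|x^k-y^k\|\to0$ (see (\ref{22})), so from $x^k\rightharpoonup x^*$ I conclude $y^k\rightharpoonup x^*$ as well; the two sequences thus share the common weak limit $x^*\in{\rm Sol}(C,A)$, completing the argument.
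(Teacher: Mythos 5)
Your proposal is correct and follows essentially the same route as the paper: specialize Algorithm \ref{alg1} to the bifunction $f(x,y)=\langle Ax,y-x\rangle$, identify the argmin step with the projection $P_C(x^k-\lambda Ay^k)$, and invoke Theorem \ref{th3.1}. The only difference is one of detail: the paper dismisses the verification of (A1)--(A8) as ``easy to check,'' whereas you carry it out explicitly --- including the key use of $F$-hemicontinuity for (A3), the constants $c_1=c_2=\frac{L}{2}$ matching $\lambda\in\big(0,\frac{\varphi}{2L}\big]$, and the recovery of $y^k\rightharpoonup x^*$ from $\|x^k-y^k\|\to 0$ --- which is a faithful filling-in of the paper's omitted steps rather than a different argument.
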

\begin{proof}
	For each pair $x,y\in C$, we define
	\begin{equation}\label{VIP}
	f(x,y):= \begin{cases}
	\langle Ax,y-x\rangle,&\text{if} \ x,y\in C,\\
	+ \infty,&\text{otherwise}.
	\end{cases}
	\end{equation}

	From the assumptions, it is easy to check that all the conditions of Theorem \ref{th3.1} are satisfied. Note that the formula (\ref{e8}) of Algorithm \ref{alg1} can be equivalently written as
	\begin{align*}
	y^{k+1}&=\underset{y\in C}{\textup{argmin}}\bigg\{\lambda \langle Ay^k,y-y^k\rangle+\dfrac{1}{2}\|y-x^{k}\|^2\bigg\},\\
	&=\underset{y\in C}{\textup{argmin}}\bigg\{\dfrac{1}{2}\|y-(x^{k}-\lambda Ay^k)\|^2\bigg\}\\
	&=P_C(x^{k}-\lambda Ay^k).
	\end{align*}
	
	By Theorem \ref{th3.1},
	the sequences $\{x^k\}$ and $\{y^k\}$ converge weakly to $x^*\in {\rm Sol}(C,f)$. It means that the sequences $\{x^k\}$ and $\{y^k\}$ converge weakly to $x^*\in {\rm Sol}(C,A)$.
	Hence, the result is true and the proof is complete.
\end{proof}

\section{Preliminary numerical results}

In this section, we provide numerical examples to illustrate our algorithms and compare with other existing algorithms in \cite{strodiot1,hieu1,hieu3}. All
the codes were written in Matlab (R2015a) and run on PC with Intel(R) Core(TM) i3-370M Processor 2.40 GHz.
In the numerical results reported in the following tables, \textquoteleft Iter.\textquoteright\ and \textquoteleft Sec.\textquoteright\ denote the number of iterations and
the cpu time in seconds, respectively.
\begin{example}\label{vd1}\upshape
Consider the equilibrium problem given in \cite{quoc} where the bifunction 
$$f:\mathbb{R}^5\times \mathbb{R}^5\to \mathbb{R}$$
is defined for every $x,y\in\mathbb{R}^5$ by
$$ f(x,y)=\langle Px+Qy+q,y-x \rangle, $$
where the vector $q\in\mathbb{R}^5$, and the matrices $P$ and $Q$ are two square matrices of order
5 such that $Q$ is symmetric positive semidefinite and $Q-P$ is negative semidefinite.
To illustrate our algorithms, the matrices $P,Q$ and the vector $q$ are chosen as follows:

	$$ q=\begin{bmatrix}
	\ \ \ 1\\
	-2\\
	-1\\
	\ \ \ 2\\
	-1\\
	\end{bmatrix}, P=\begin{bmatrix}
	3.1&2&0&0&0\\
	2&3.6&0&0&0\\
	0&0&3.5&2&0\\
	0&0&2&3.3&0\\
	0&0&0&0&3\\
	\end{bmatrix}, Q=\begin{bmatrix}
	1.6&1&0&0&0\\
	1&1.6&0&0&0\\
	0&0&1.5&1&0\\
	0&0&1&1.5&0\\
	0&0&0&0&2\\
	\end{bmatrix}. $$
	
	The feasible set is
	$$ C=\bigg\{x\in \mathbb{R}^5:\sum_{i=1}^{5}x_i\geq -1,-5\leq x_i\leq 5,\ i=1,\ldots,5\bigg\}. $$
	
	
	Then all the conditions (A1)-(A8) of Theorem \ref{th3.1} are satisfied. We will apply Algorithm \ref{alg1} and the algorithm (\ref{eq2}) (GEA) to solve the EP (\ref{pt1}). In both algorithms, we will the same starting point $x^0$, the same step size $\alpha_k=\beta_k=\lambda =0.27$ and the stopping rule $\|y^{k+1}-y^k\|+\|y^k-x^k\|<10^{-6}$ for Algorithm \ref{alg1} and $\|\tilde{x}^k-\bar{x}^k\|<10^{-6}$ for GEA.
	
	In Table \ref{bang2}, we have compared the performance of Algorithm \ref{alg1} (GRA1) with the General Extragradient Algorithm (\ref{eq2}) (Algorithm GEA in \cite{strodiot1}).
	\begin{table} [!ht]
		\centering 
		\begin{tabular}{l c c c c c c}
			\toprule
			& \multicolumn{2}{c}{$x^0=(-1,3,1,1,2)$}& \multicolumn{2}{c}{$x^0=(1,1,1,1,1)$}& \multicolumn{2}{c}{$x^0=(-1,0,0,0,0)$} \\
			\cmidrule(l){2-3} \cmidrule(l){4-5}\cmidrule(l){6-7}
			& Sec. & Iter. & Sec. & Iter. & Sec. &Iter.\\
			\midrule
			GEA & 2.6094 & 40 & 2.6875 & 40 & 2.6094 & 40\\
			GRA1 & 2.4844 & 97 & 2.4688 & 96 & 2.4531 & 96\\
			\bottomrule
		\end{tabular}
		\caption{\small Comparison of Algorithm \ref{alg1} and Algorithm GEA in Example \ref{vd1} with different $x^0$}
		\label{bang2}
	\end{table}
	
	Convergent behavior of two algorithms with different $x^0$ is given in Figures \ref{hinh1}-\ref{hinh3}. In this figure, the value of errors $\|y^{k+1}-y^k\|+\|y^k-x^k\|$ (Algorithm \ref{alg1}) and $\|\tilde{x}^k-\bar{x}^k\|$ (EGM) is represented by the $y$-axis, number of iterations is represented by the $x$-axis.
	\begin{figure}[H]
		\centering
		\includegraphics[scale=0.8]{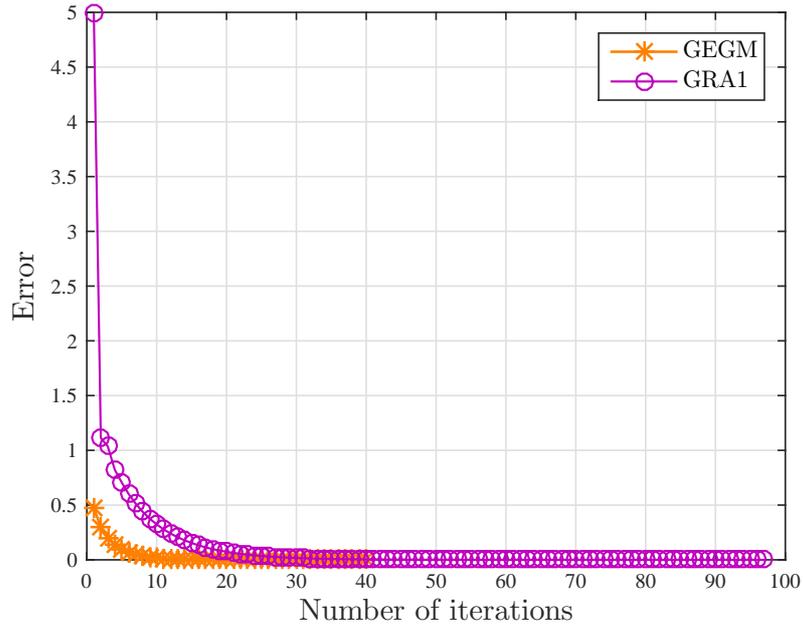}
		\caption{Numerical behavior of two algorithms in Example \ref{vd1} with $x^0=(-1,3,1,1,2)$}\label{hinh1}
	\end{figure}
\begin{figure}[H]
	\centering
	\includegraphics[scale=0.8]{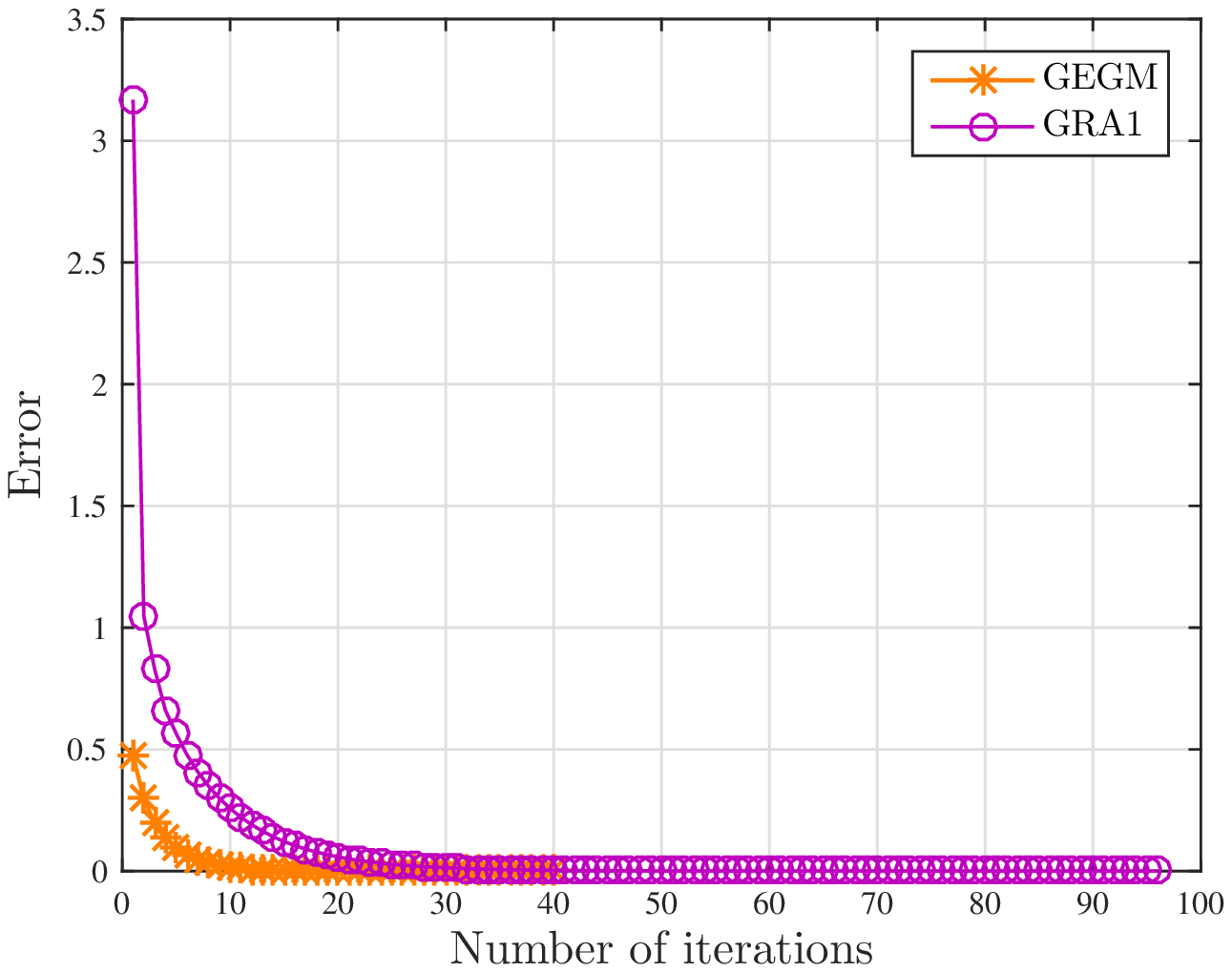}
	\caption{Numerical behavior of two algorithms in Example \ref{vd1} with $x^0=(1,1,1,1,1)$}\label{hinh2}
\end{figure}
\begin{figure}[H]
	\centering
	\includegraphics[scale=0.8]{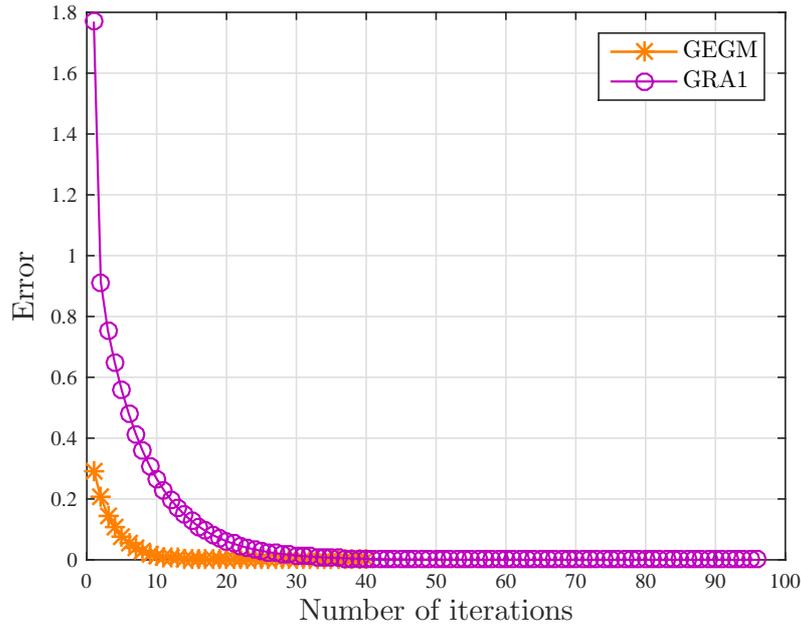}
	\caption{Numerical behavior of two algorithms in Example \ref{vd1} with $x^0=(-1,0,0,0,0)$}\label{hinh3}
\end{figure}
	Let us observe that computational time of Algorithm \ref{alg1} is smaller than that of Algorithm GEA in \cite{strodiot1} but not much for this simple and small example. 
\end{example}
\begin{example}\upshape (see also \cite{DMQ})\label{vd2}
	Suppose that $H=L^2([0,1])$ with the inner product
	$$\langle x,y\rangle:=\int_{0}^{1}x(t)y(t)dt,\ \forall x,y\in H $$
	and the induced norm
	$$\| x\|:=\bigg(\int_{0}^{1}|x(t)|^2dt\bigg)^{\frac{1}{2}},\ \forall x\in H.$$
	
	Let us set
	$$ C=\{x\in H: \|x\|\leq 1\},\ \ f(x,y)=\Big\langle \Big(\frac{3}{2}-\|x\|\Big)x,y-x\Big\rangle.$$
	
	We will show that $f$ is strongly pseudomonotone on $C$. Indeed, assume that $x,y\in C$ are such that $f(x,y)=\Big\langle \Big(\frac{3}{2}-\|x\|\Big)x,y-x\Big\rangle\geq 0$. Since $\frac{3}{2}-\|x\|>0$, we have $\langle x,y-x\rangle\geq 0$. Note that
	\begin{align*}
	f(y,x)&=\Big\langle \Big(\frac{3}{2}-\|y\|\Big)y,x-y\Big\rangle\\
	&\leq \Big(\frac{3}{2}-\|y\|\Big)(\langle y,x-y\rangle-\langle x,x-y\rangle)\\
	&=-\Big(\frac{3}{2}-\|y\|\Big)\|x-y\|^2\\
	&\leq -\frac{1}{2}\|x-y\|^2.
	\end{align*}
	
	So $f$ is strongly pseudomonotone on $C$, where $\gamma=\frac{1}{2}$. On the other hand, $f$ are neither strongly monotone nor monotone on $C$. To see this, we take $x=\frac{3}{2}\sqrt{\frac{t}{2}}$, $y=\sqrt{2t}$ and see that
	\begin{align*}
	f(x,y)+f(y,x)&=\bigg\langle \bigg(\frac{3}{2}-\frac{3}{4}\bigg)x,y-x\bigg\rangle+\bigg\langle \bigg(\frac{3}{2}-1\bigg)y,x-y\bigg\rangle\\
	&=\bigg\langle \frac{9}{8}\sqrt{\frac{t}{2}}-\frac{1}{2}\sqrt{2t},\sqrt{2t}-\frac{3}{2}\sqrt{\frac{t}{2}}\bigg\rangle\\
	&=\dfrac{1}{2}\bigg(\frac{9}{8\sqrt{2}}-\frac{\sqrt{2}}{2}\bigg)\bigg(\sqrt{2}-\dfrac{3}{2\sqrt{2}}\bigg)>0.
	\end{align*}
	
	To apply our Algorithm \ref{alg2}, it remains to prove that $f$ satisfies Lipschitz-type condition (A5). Indeed, for all $x,y,z\in C$ we have
	\begin{align}\label{e54}
	f(x,y)+f(y,z)&=f(x,z)+\bigg\langle \bigg(\frac{3}{2}-\|x\|\bigg)x-\bigg(\frac{3}{2}-\|y\|\bigg)y,y-z\bigg\rangle\notag\\
	&=f(x,z)+\frac{3}{2}\langle x-y,y-z\rangle-\langle\|x\|x-y\|y\|,y-z\rangle\notag\\
&=f(x,z)+\frac{3}{4}[\|x-z\|^2-\|x-y\|^2-\|y-z\|^2]\notag\\
&\hskip6cm-\langle\|x\|x-y\|y\|,y-z\rangle.
 	\end{align}
	
	On the other hand, we have
	\begin{align}\label{e55}
\langle\|x\|x-y\|y\|,y-z\rangle&=\langle\|x\|(x-y)+y(\|x\|-\|y\|),y-z\rangle\\
&\leq 2\big\|x-y\|\big|\big\|y-z\big\|\notag\\
&\leq \|x-y\|^2+\|y-z\|^2.
	\end{align}
	
	Combining (\ref{e54}) and (\ref{e55}) we get
	\begin{align*}
	f(x,y)+f(y,z)&\geq f(x,z)-\frac{7}{4}\|x-y\|^2-\frac{7}{4}\|y-z\|^2.
	\end{align*}
	
	Hence, the Lipschitz type inequality (A5) is satisfied. We will apply Algorithm \ref{alg2} (GRA2) to solve the EP (\ref{pt1}) and compare it with Algorithm 1 of \cite{hieu1} (Hieu's algorithm) and Algorithm 3.1 of \cite{hieu3} (Popov's algorithm). To test three algorithms, we take the same parameter $\lambda_k=\frac{40}{k+1}$, $k=0,1,2,...$ and
	\begin{enumerate}[label=(\alph*),leftmargin=1.6\parindent]
		\item[\textup{(i)}] the stopping criterion $\|y^{k+1}-x^k\|+\|y^k-x^k\|<10^{-3}$ for Algorithm \ref{alg2} and Popov's algorithm;  $\|x^k-y^k\|<10^{-3}$ for Hieu's algorithm;
		\item[\textup{(ii)}] the same initial point $x^0$.
	\end{enumerate}

Numerical results of three algorithms are presented in Table \ref{bang1}.
	\begin{table}[H]
		\centering 
		\begin{tabular}{l c c c c}
			\toprule
			& \multicolumn{2}{c}{$x^0=\frac{1}{200}(\sin(-3t)+\cos(-10t)$}& \multicolumn{2}{c}{$x^0=\frac{1}{85}(t^3+1)e^{5t}$}\\
			\cmidrule(l){2-3} \cmidrule(l){4-5}
			& Sec. & Iter. & Sec. & Iter. \\
			\midrule
			Hieu's algorithm &  0.0091149 & 86 & 0.0081864 & 86 \\
			Popov's algorithm & 0.013027 & 118 & 0.012297 & 118 \\
			GRA2&               0.0065136 & 83 & 0.0065654 & 83 \\
			\bottomrule
		\end{tabular}
		\caption{\small Comparison of three algorithms in Example \ref{vd2}}
		\label{bang1}
	\end{table}

\begin{figure}[H]
	\centering
	\includegraphics[scale=0.8]{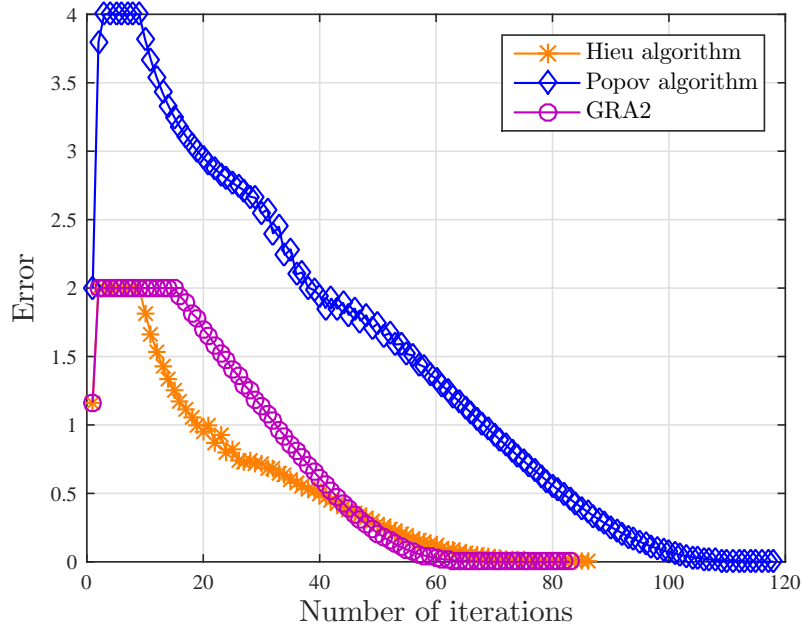}
	\caption{Comparison of three algorithms in Example \ref{vd2} with $x^0=\frac{1}{200}(\sin(-3t)+\cos(-10t))$}\label{h1vd2}
\end{figure}
\begin{figure}[H]
	\centering
	\includegraphics[scale=0.8]{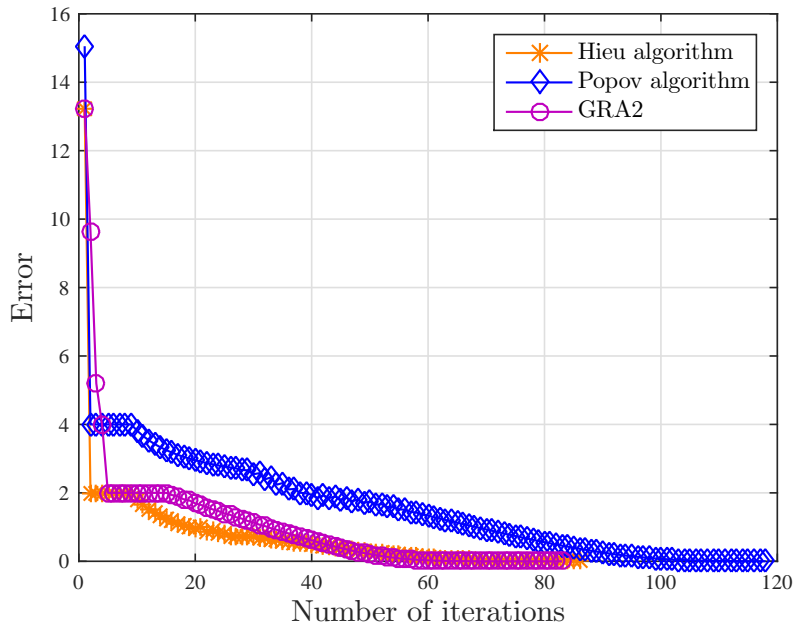}
	\caption{Comparison of three algorithms in Example \ref{vd2} with $x^0=\frac{1}{85}(t^3+1)e^{5t}$}\label{h1vd2}
\end{figure}
\end{example}

\section{Conclusions}
This paper deals with the convergence analysis and some numerical examples of the golden ratio algorithm for pseudomonotone equilibrium problems in Hilbert spaces. The proposed algorithm is an equilibrium version of a very recent algorithm introduced by Malitsky \cite{malitsky} (for variational inequalities). Moreover, the proposed algorithm is convergent under a weaker condition than the joint weak lower semicontinuity of the bifunction, assumed in several papers before. Numerical results show that the algorithm performs better than some existing methods. Note that, obtaining a result for Algorithm \ref{alg1} without using the condition (A5) seems to be more delicate and further investigations are necessary.

\section*{Acknowledgement}The author would like to thank Yura Malitsky for sending him the paper \cite{malitsky} and for many useful comments. 


\end{document}